\theoremstyle{plain}
\newtheorem{thm}{Theorem}[section]   % oppure [section]
\newtheorem{lem}[thm]{Lemma} 
\newtheorem{prop}[thm]{Proposition} 
\newtheorem{oss}[thm]{Remark}
\newtheorem{propdef}[thm]{Proposition/Definition}
\theoremstyle{definition} 
\newtheorem{defn}{Definition}[section] 
\theoremstyle{remark} 
\newcommand{\N}{\mathbb{N}}
\newcommand{\R}{\mathbb{R}}
\renewcommand{\Gamma}{\varGamma}
\begin{document}
\title{Diffeological Symplectic Frobenius Reciprocity}
\author{Gabriele BARBIERI${}^{\dagger}$ \& Mauro SPERA${}^{\dagger\dagger}$\footnote{Corresponding author} \\
${}^{\dagger}$ Dipartimento di Matematica e Applicazioni\\
Università di Milano-Bicocca, Via Cozzi 55, 20125 Milano, Italy \\
and \\
Dipartimento di Matematica \enquote{Felice Casorati},\\
Università di Pavia,\\
Via Ferrata 5, 27100 Pavia, Italy\\
e-mail: gabriele.barbieri01@universitadipavia.it\\
\phantom{void} \\
${}^{\dagger\dagger}$ Dipartimento di Matematica e Fisica,\\
Università Cattolica del Sacro Cuore,\\
Via della Garzetta 48, 25133 Brescia, Italy\\
e-mail: mauro.spera@unicatt.it
}

\date{\today}

\maketitle

%\begin{document}

\begin{abstract}
In this note we prove that the symplectic Frobenius Reciprocity established in the paper \cite{[RZ]} as a set bijection is indeed a diffeological diffeomorphism, as conjectured by its authors. The same holds in the prequantum space context. 
\end{abstract}

\medskip
{\bf Keywords:} Symplectic reduction; Frobenius reciprocity; diffeology.\par
\smallskip
{\bf MSC 2020:} 53D20, %Momentum maps; symplectic reduction
81R05, %Finite-dimensional groups and algebras motivated by physics and their representations 
20C35, %Applications of group representations to physics and other areas of science
58A40. % Differential spaces

\section{Introduction}
The celebrated group theoretical Frobenius Reciprocity (FR for short) has been widely extended via modern representation theory, see e.g. \cite{[K]}. Subsequently, V. Guillemin and S. Sternberg  reformulated the basic ingredients of the representation theoretic approach in symplectic terms (see the foundational \cite{[GS1]}) and proved it for coadjoint orbits of compact Lie groups in \cite{[GS2]}. Recently, T. Ratiu and F. Ziegler (\cite{[RZ]}) succeeded in establishing a symplectic FR as a {\it set-theoretical} bijection between manifolds and conjectured that this bijection is indeed a {\it diffeological} diffeomorphism. The present note aims at proving that this is indeed true.
Diffeology, pioneered by J.M. Souriau \cite{[S1],[S2]} and then mainly developed by P. Iglesias-Zemmour (see e.g. \cite{[IZ]}) proved to be quite effective in analyzing singular spaces such as orbit spaces of group actions - which are also alternatively studied by noncommutative geometry via operator theoretic tools (see e.g. \cite{[C]}) - by virtue of its flexibility and intuitive content mildly departing from that of classical differential geometry.

The outline of the paper is the following. In the first section we collect basic tools from diffeology theory in order to ease readability. Then, in Section 3, we recall the symplectic framework of \cite{[RZ]} together with their "induction in stages" theorem. Subsequently, in Section 4, we provide the proof of our  diffeological FR and this will be done by building on basic constructions in \cite{[W]}. After discussing the prequantum space version of FR in \cite{[RZ]} (Section 5), its diffeological version will be proved in Section 6.
The final section points out possible future research directions.

\section{Tools in diffeology}
 In this preliminary section we are going to  gather together basic diffeological notions and results, referring to \cite{[IZ]} for a thorough treatment.
\begin{defn}
Let $X$ be a non empty set. A \textit{diffeology} on $X$ is a set $\mathcal{D}$ of parametrizations (§1.4 of \cite{[IZ]}) of $X$, such that the following
axioms are satisfied:
\begin{enumerate}
\item The set $\mathcal{D}$ contains the constant parametrizations $P:U\to X, P(r)=x, \forall r\in U$, where $x\in X$, $U\subseteq {\R}^p, p\in {\N}$.
\item Let $P:U\to X, U\subseteq {\R}^p, p\in {\N}$ be a parametrization. If for any $r\in U$
there exists an open neighborhood $V$ of $r$ such that $P|_V\in \mathcal{D}$,
then $P\in \mathcal{D}$.
\item For any $P:U\to X$ in $\mathcal{D}$, for any $V\subseteq {\R}^p, p\in{\N}$ and for any $F\in \mathcal{C}^{\infty}(V,U)$, $P\circ F \in \mathcal{D}$.
\end{enumerate}
The pair $(X, \mathcal{D})$ is called \textit{diffeological space} and the elements of $\mathcal{D}$ are called \textit{plots} of the space $X$.
\end{defn}

\begin{defn}
Let $X, X'$ be diffeological spaces. A map $f: X\to X'$ is said to be \textit{smooth} if, for any plot $P$ of $X$, $f\circ P$ is a plot of $X'$.
A map $f:X\to X'$ is a \textit{diffeomorphism} if it is bijective and both $f$ and $f^{-1}$ are smooth.
\end{defn}

\begin{defn}
Let $(X,\mathcal{D})$ and $(X',\mathcal{D}')$ be two diffeological spaces. The diffeology $\mathcal{D}$ is said to be \textit{finer} than $\mathcal{D}'$ if $\mathcal{D}\subset\mathcal{D}'$. In this case, $\mathcal{D}'$ is said to be \textit{coarser} than $\mathcal{D}$.
\end{defn}

\begin{defn}
Let $f:X\to X'$ be a map, where $X$ is a set and $(X', \mathcal{D}')$ is a diffeological space. The \textit{pullback diffeology} on $X$ is the coarsest diffeology such that $f$ is smooth and it is denoted by $f^*(\mathcal{D}')$. In particular, $P\in f^*(\mathcal{D}')$ if and only if $f\circ P\in \mathcal{D}'$.
\end{defn}

\begin{defn}
Let $f:X\to X'$ be a map between diffeological spaces. The function $f$ is said to be an \textit{induction} if it is injective and $f^*(\mathcal{D}')=\mathcal{D}$, i.e. for any plot $P'$ of $X'$ with values in $f(X)$, $f^{-1}\circ P'$ is a plot of $X$.
\end{defn}

\begin{defn}
Let $X$ be a diffeological space and let $A\subset X$. The \textit{subset diffeology} is the set of the plots $P$ of $X$ with values in $A$.
\end{defn}

\begin{oss}
Let $X$ be a diffeological space and $A\subset X$, endowed with the subset diffeology. The inclusion $j:A\xhookrightarrow{} X$ is an induction ({§1.36 of \cite{[IZ]}}).
\end{oss}

\begin{defn}
Let $f:X\to X'$ be a map between a diffeological space $(X,\mathcal{D})$ and a set $X'$. There exists a finest diffeology on $X'$ such that $f$ is smooth; it is called \textit{pushforward diffeology} and it is denoted by $f_*(\mathcal{D})$. A parametrization $P':U\to X'$ is a plot of $f_*(\mathcal{D})$ if and only if for any $r\in U$ there exists an open neighbourhood $V$ of $r$ such that $P'|_V$ is constant or there exists a plot $Q:V\to X$ of $X$ such that $P'|_V=f\circ Q$.
\end{defn}

\begin{defn}
Let $f:X\to X'$ be a map between diffeological spaces. The function $f$ is said to be a \textit{subduction} if it is surjective and $f_*(\mathcal{D})=\mathcal{D}'$.
\end{defn}

\begin{defn}
Let $(X,\mathcal{D})$ be a diffeological space and let $\mathcal{R}$ be an equivalence relation on $X$. Let $\pi:X\to X/\mathcal{R}$ be the projection onto the quotient. The \textit{quotient diffeology} on $X/\mathcal{R}$ is defined as $\pi_*(\mathcal{D})$.
\end{defn}

\begin{oss}
The projection onto  the quotient, endowed with the quotient diffeology, is a subduction.
\end{oss}

\begin{defn}
Let $\{(X_i, \mathcal{D}_i)\}_{i\in I}$ be a collection of diffeological spaces indexed by some $I$. The \textit{product diffeology} on $\Pi_{i\in {I}}X_i$ is the coarsest diffeology $\mathcal{D}$ such that the projections $\pi_i: \Pi_{j\in I}X_j\to X_i$ are smooth. The pair $(\Pi_{i\in I}X_i, \mathcal{D})$ is called \textit{diffeological product}.
\end{defn}

\begin{oss}
Under the assumptions of the previous definition, the projections $\pi_i$ are subductions ({§1.56 of \cite{[IZ]}}).
\end{oss}

\begin{defn}
Let $X, X'$ be two diffeological spaces and let $\mathcal{C}^{\infty}(X,X')$ be the set of smooth maps between $X$ and $X'$, $f\in \mathcal{C}^{\infty}(X,X'), x\in X$ . Let $ev(f,x)=f(x)$ be the evaluation map. A \textit{functional diffeology} is any diffeology defined on $\mathcal{C}^{\infty}(X,X')$ such that $ev$ is smooth. The \textit{standard functional diffeology} is the coarsest  functional diffeology defined on $\mathcal{C}^{\infty}(X,X')$.
\end{defn}

\begin{defn}
Let $V$ be a vector space over a field $\mathbb{K}$. A diffeology on $V$ is a \textit{vector space diffeology} if the sum of two vectors and the multiplication of a vector by a scalar are smooth.
\end{defn}

\begin{propdef}
Let $V$ be a vector space over a field $\mathbb{K}$. There exists a finest vector space diffeology on $V$ and it is generated by a family of parametrizations $P$ such that
$$P:r\mapsto \sum_{i\in I} \lambda_i(r)v^i,$$ where $I$ is finite, $v^i\in V$ and the $\lambda_i$ are smooth parametrizations of $\mathbb{K}$, defined on the domain of $P$. Equivalently, the plots of this diffeology are parametrizations $P:U\to V$ such that for any $r_0\in U$ there exists an open neighborhood $W\subseteq U$ of $r_0$, a family of smooth parametrizations $\lambda_{i}:W\to \mathbb{K}$ and a family of vectors $v^i\in V$ indexed by a finite set of indices $I$ such that:
$$P|_W:r\mapsto \sum_{i\in I} \lambda_i(r)v^i.$$

This diffeology is called the {\rm fine} diffeology.
\end{propdef}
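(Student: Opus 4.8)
The plan is to write down an explicit candidate $\mathcal{F}$ for the finest vector space diffeology — precisely the set described by the second (``Equivalently\dots'') clause of the statement — then to check in turn that $\mathcal{F}$ is a diffeology, that it is a vector space diffeology, and that it is contained in every vector space diffeology on $V$; the identification of $\mathcal{F}$ with the diffeology generated by the family $\mathcal{G}$ of parametrizations $r\mapsto\sum_{i\in I}\lambda_i(r)v^i$ (with $I$ finite, $v^i\in V$, $\lambda_i$ smooth) will then follow formally.

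First I would set $\mathcal{F}$ to be the set of parametrizations $P:U\to V$ such that every $r_0\in U$ admits an open neighborhood $W\subseteq U$ with $P|_W=\sum_{i\in I}\lambda_i v^i$ for some finite $I$, vectors $v^i\in V$ and smooth $\lambda_i\in\mathcal{C}^{\infty}(W,\mathbb{K})$, and verify the three diffeology axioms: constant parametrizations are of this form globally ($x = 1\cdot x$); being locally in $\mathcal{F}$ is, by unravelling, the same as being in $\mathcal{F}$, which gives the locality axiom; and if $P\in\mathcal{F}$ and $F\in\mathcal{C}^{\infty}(U',U)$ with $U'$ open in some $\R^{q}$, then on $F^{-1}(W)$ one has $P\circ F=\sum_i(\lambda_i\circ F)v^i$ with $\lambda_i\circ F$ smooth. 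Next, equipping $V$ with $\mathcal{F}$, $\mathbb{K}$ with its standard diffeology, and the products with the product diffeologies, I would note that a plot of $V\times V$ is a pair $(P_1,P_2)$ of plots of $V$, and that a plot of $\mathbb{K}\times V$ is a pair $(\alpha,P)$ with $\alpha$ smooth and $P$ a plot of $V$; intersecting the relevant local neighborhoods and concatenating the finite families then exhibits $P_1+P_2$ and $\alpha\cdot P$ again as local sums with smooth coefficients, so both algebraic operations are smooth and $\mathcal{F}$ is a vector space diffeology.

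To see that $\mathcal{F}$ is the finest such, let $\mathcal{D}$ be any vector space diffeology on $V$; by the locality axiom for $\mathcal{D}$ it suffices to show that every parametrization of the form $P=\sum_{i\in I}\lambda_i v^i:W\to V$ lies in $\mathcal{D}$. For each $i$ the constant map $c_i\equiv v^i$ is a plot of $\mathcal{D}$ (first diffeology axiom), hence $(\lambda_i,c_i):W\to\mathbb{K}\times V$ is a plot of the product diffeology (its two projections are plots), and postcomposing with scalar multiplication — smooth for $\mathcal{D}$ — shows $\lambda_i v^i\in\mathcal{D}$; a finite induction on $|I|$, pairing $\lambda_1 v^1$ with $\sum_{i\ge 2}\lambda_i v^i$ and using smoothness of addition, then gives $P\in\mathcal{D}$. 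Thus $\mathcal{F}\subseteq\mathcal{D}$ for every vector space diffeology $\mathcal{D}$, which together with the previous paragraph says $\mathcal{F}$ is the finest one. Finally $\mathcal{G}\subseteq\mathcal{F}$ trivially, while any diffeology containing $\mathcal{G}$ must, by the locality axiom, contain every parametrization that is locally in $\mathcal{G}$, i.e. all of $\mathcal{F}$; hence $\mathcal{F}$ is exactly the diffeology generated by $\mathcal{G}$, and the two descriptions in the statement coincide and define the fine diffeology.

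I expect the only genuinely delicate point to be the ``finest'' step: the temptation is to deduce that $\sum_{i\in I}\lambda_i v^i$ is a plot of $\mathcal{D}$ merely ``because $+$ and $\cdot$ are smooth'', but this must be routed correctly through plots of the product diffeologies $\mathbb{K}\times V$ and $V\times V$, where the real work is done by the facts that constant parametrizations are always plots and that a parametrization into a product is a plot iff its components are — after which the passage from local to global is a clean application of the locality axiom. Everything else (the three axioms for $\mathcal{F}$, smoothness of the operations, the generated-diffeology identification) is bookkeeping once the candidate $\mathcal{F}$ has been isolated.
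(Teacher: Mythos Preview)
Your proposal is correct and is essentially the standard argument one finds in \S3.7 of Iglesias-Zemmour's book. Note, however, that the paper does not give its own proof of this Proposition/Definition at all: its entire proof is the single line ``See \S3.7 of \cite{[IZ]}.'' So there is nothing to compare against in the paper itself; you have supplied the details that the paper outsources to the reference, and your sketch matches that reference's approach (define the candidate $\mathcal{F}$ by the local-sum description, verify the three diffeology axioms, check that addition and scalar multiplication are smooth by working with plots of the products, and then show minimality by routing a finite sum $\sum_i\lambda_i v^i$ through the constant plots and the smoothness of the operations).
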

\begin{proof}
See {§3.7 of \cite{[IZ]}}.
\end{proof}

\begin{defn}
An $n$-dimensional manifold $M$ can be regarded as a diffeological space whose diffeology is generated by local charts (§4.1 of \cite{[IZ]}).
\end{defn}

\begin{defn}
Let $X$ be a diffeological space. A \textit{differential $k$-form} on $X$ is a map $\alpha$ that associates to each plot $P$ of $X$ a differential $k$-form $\alpha(P)$ defined on the domain of $P$, such that, for any smooth parametrization $F$ in the domain of $P$,  the following holds:
$$\alpha(P\circ F)=F^*(\alpha(P)).$$
\end{defn}

\begin{propdef}
Let $X$ and $X'$ be two diffeological spaces, $f:X\to X'$ a smooth map and $\alpha'$ a $k$-form on $X'$. Then there exists a differential $k$-form on $X$, denoted by $f^*(\alpha')$, such that $$f^*(\alpha') (P) =\alpha'(f\circ P),$$ for any $P$ plot of $X$. The $k$-form $f^*(\alpha')$ is called \textit{pullback} of $\alpha'$ by $f$.
\end{propdef}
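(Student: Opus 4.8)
The plan is to \emph{define} $f^*(\alpha')$ by the very formula in the statement and then verify that it is a legitimate diffeological $k$-form on $X$. Concretely, for each plot $P:U\to X$ of $X$, set
$$f^*(\alpha')(P) := \alpha'(f\circ P).$$
First I would check that this is well posed: since $f$ is smooth and $P$ is a plot of $X$, the composite $f\circ P$ is a plot of $X'$, so $\alpha'$ assigns to it an ordinary differential $k$-form $\alpha'(f\circ P)$ on the domain of $f\circ P$; but that domain is precisely $U$, the domain of $P$, so $f^*(\alpha')(P)$ is a $k$-form on the domain of $P$, exactly as required by the definition of a differential form on a diffeological space.

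The one substantive point --- which is really a one-line computation rather than a genuine obstacle --- is verifying the compatibility (naturality) axiom. Let $P:U\to X$ be a plot and let $F\in\mathcal{C}^{\infty}(V,U)$ be a smooth parametrization in the domain of $P$. Then, using associativity of composition and the fact that $\alpha'$ is itself a differential $k$-form on $X'$,
$$f^*(\alpha')(P\circ F) = \alpha'\big(f\circ(P\circ F)\big) = \alpha'\big((f\circ P)\circ F\big) = F^*\big(\alpha'(f\circ P)\big) = F^*\big(f^*(\alpha')(P)\big),$$
which is precisely the identity needed. Hence $f^*(\alpha')$ is a differential $k$-form on $X$.

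Finally, uniqueness of $f^*(\alpha')$ is immediate: any differential $k$-form on $X$ satisfying the displayed identity in the statement has its value on every plot $P$ prescribed to be $\alpha'(f\circ P)$, hence coincides with the form just constructed. I do not expect any genuinely hard step: the entire content is that the collection of plots of a diffeological space is closed under post-composition with smooth maps between diffeological spaces (so that $f\circ P$ is again a plot and $\alpha'(f\circ P)$ makes sense) and under pre-composition with ordinary smooth parametrizations (which is built into axiom (3) of a diffeology), and that the compatibility condition for $\alpha'$ transports along these compositions.
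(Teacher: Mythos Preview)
Your proof is correct and complete: defining $f^*(\alpha')$ by the displayed formula and checking the single compatibility identity via associativity of composition is exactly the standard argument. The paper itself does not give a proof at all, merely citing \S6.32 of \cite{[IZ]}; your write-up is essentially what one finds there, so there is no meaningful difference in approach to discuss.
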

\begin{proof}
See §6.32 of \cite{[IZ]}.
\end{proof}

\begin{prop}
Let $X$ and $X'$ be two diffeological spaces.
Let $\pi:X\to X'$ be a subduction and let $\alpha$ be a differential $k$-form on $X$.
Then $\alpha$ is the pullback of a $k$-form $\beta$ defined on $X'$ if and only
if, for any $P,Q$ plots of $X$ such that $\pi\circ P=\pi\circ Q$, $\alpha(P)=\alpha(Q)$.
\end{prop}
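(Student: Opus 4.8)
The plan is to treat the two implications separately: the ``only if'' direction is immediate from the definition of the pullback of a form, while the ``if'' direction requires constructing $\beta$ explicitly from $\alpha$ and checking that it is a well defined differential form. For the forward implication, suppose $\alpha=\pi^*(\beta)$. By definition of the pullback one has $\alpha(P)=\beta(\pi\circ P)$ for every plot $P$ of $X$; hence, whenever $P,Q$ are plots of $X$ with $\pi\circ P=\pi\circ Q$, we get $\alpha(P)=\beta(\pi\circ P)=\beta(\pi\circ Q)=\alpha(Q)$, as required.

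For the converse I would build $\beta$ plot by plot. Let $P'\colon U\to X'$ be a plot of $X'$. Since $\pi$ is a subduction, $\mathcal{D}'=\pi_*(\mathcal{D})$, so $P'$ has the local lifting property of the pushforward diffeology: every $r\in U$ admits an open neighbourhood $V$ on which either $P'|_V$ is constant --- and then, $\pi$ being surjective, $P'|_V=\pi\circ Q$ for $Q$ the constant plot at a chosen preimage --- or $P'|_V=\pi\circ Q$ for some plot $Q\colon V\to X$ of $X$. On each such $V$ I set $\beta(P')|_V:=\alpha(Q)$.

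Then I would check consistency. First, the value is independent of the chosen lift: if $Q,\widetilde Q\colon V\to X$ are plots with $\pi\circ Q=P'|_V=\pi\circ\widetilde Q$, the hypothesis gives $\alpha(Q)=\alpha(\widetilde Q)$. Second, the local pieces glue: on an overlap $V_1\cap V_2$ the restrictions $Q_1|_{V_1\cap V_2}$ and $Q_2|_{V_1\cap V_2}$ are again plots of $X$ (by axiom (3), being composites with the smooth inclusion) lifting the same parametrization $P'|_{V_1\cap V_2}$; since a differential form is compatible with precomposition by this inclusion, the restriction of $\beta(P')|_{V_i}=\alpha(Q_i)$ to $V_1\cap V_2$ equals $\alpha(Q_i|_{V_1\cap V_2})$, while the hypothesis forces $\alpha(Q_1|_{V_1\cap V_2})=\alpha(Q_2|_{V_1\cap V_2})$; hence the locally defined forms patch to a genuine $k$-form $\beta(P')$ on all of $U$. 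Third, $\beta$ is a differential $k$-form in the sense of the definition: for a smooth parametrization $F$ into $U$, precomposing a local lift $Q$ of $P'$ with $F$ produces, by axiom (3), a local lift of $P'\circ F$, and $\alpha(Q\circ F)=F^*(\alpha(Q))$; patching these identities yields $\beta(P'\circ F)=F^*(\beta(P'))$.

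It then remains to verify $\pi^*(\beta)=\alpha$: for any plot $P$ of $X$, $P$ is itself a (global, hence local) lift of the plot $\pi\circ P$ of $X'$, so $(\pi^*\beta)(P)=\beta(\pi\circ P)=\alpha(P)$ directly from the construction of $\beta$. The single delicate point --- and the only place where the stated two-plot condition is genuinely used --- is the well-definedness and gluing of $\beta(P')$ in the converse direction; everything else is routine manipulation with restrictions of plots and with the functoriality of the pullback, so I do not expect a real obstacle.
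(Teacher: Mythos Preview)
Your argument is correct and is the standard proof of this fact. The paper itself does not give a proof at all: it simply writes ``See \S 6.38 of \cite{[IZ]}'' and moves on, since this proposition is a well-known criterion from Iglesias-Zemmour's book used only as a tool later in the paper. Your write-up is essentially the argument one finds in that reference --- the forward direction is immediate from the definition of pullback, and for the converse one defines $\beta(P')$ locally via lifts along the subduction, checks independence of the lift using the hypothesis, glues on overlaps using the restriction compatibility of diffeological forms, and verifies the smooth-parametrization axiom by precomposing lifts. There is nothing to correct.
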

\begin{proof}
See {§6.38 of \cite{[IZ]}}.
\end{proof}

\begin{defn}
A \textit{diffeological group} $G$ is a group endowed with a compatible diffeology such that multiplication and inversion are smooth maps.
\end{defn}

\begin{defn}
Let $X$ be a diffeological space. Being a subset of $\mathcal{C}^{\infty}(X,X)$, $\mathrm{Diff}(X)$ inherits a functional diffeology. It can also be endowed with the structure of diffeological group: it is the coarsest group diffeology such that the evaluation map is smooth. A parametrization $P$ of $\mathrm{Diff}(X)$ is a plot of the \textit{standard diffeology of group of diffeomorphisms} if and only if $r\mapsto P(r)$ and $r\mapsto P^{-1}(r)$ are plots for the functional diffeology defined on $\mathcal{C}^{\infty}(X,X)$ .

\end{defn}

\begin{defn}
Let $X$ be a diffeological space and $G$ a diffeological group. A \textit{smooth action} of $G$ on $X$ is any smooth homomorphism $\rho: G\to \mathrm{Diff}(X)$, where $\mathrm{Diff}(X)$ is endowed with the functional diffeology of group of diffeomorphisms.
\end{defn}

\section{Symplectic induction in stages}
In this Section we shall act within the framework of \cite{[RZ]}, using the same notation therein.
All manifolds and groups involved will be finite dimensional.\par
Let $H\subset G$ be a closed subgroup of a Lie group $G$ and let us consider a Hamiltonian $H$-space $(Y, \omega_Y,\Psi)$. Let $\overline{\omega}\in T^*G$ such that 
$\overline{\omega}(\delta p):=\langle p, \delta q\rangle$, where $\pi:T^*G\to G$ is the canonical projection and $\pi(p)=q, \delta p\in T_p(T^*G)$, $\delta q:=\pi_*(\delta p)\in T_{\pi(p)}G$. The manifold $N:=T^*G\times Y$ can be endowed with the symplectic form $\omega:=d\overline{\omega}+\omega_Y$ and with a $G\times H$-action:
$$(g,h)(p,y):=(gph^{-1}, h(y)),$$ where $h(y)$ is the action of $h\in H$ on $y\in Y$. 
By \cite{[RZ]}, the equivariant moment map associated to this action is
$\phi\times \psi: N \to \mathfrak{g}^*\times \mathfrak{h}^*$, where
$$\phi(p,y)=pq^{-1}$$
$$\psi(p,y)=\Psi(y)-q^{-1}p_{|\mathfrak{h}}.$$
\begin{defn}
The induced manifold is the reduction of $N$ at $0\in \mathfrak{h}^*$:
$$\mathrm{Ind}_H^GY:=N\sslash H=\psi^{-1}(0)/H.$$
\end{defn}
\begin{oss}
\label{oss: action}
Since the action of $H\subset G$ on $T^*G$ is free and proper, it is free and proper on $N$ as well (see Appendix for a proof). This implies that the induced manifold is actually a manifold.
\end{oss}

\begin{thm}
\label{thm: stages} {\rm (\cite{[RZ]})}
Let $H\subset K\subset G$, where $H$ and $K$ are closed subgroups of the Lie group $G$. Then:
$$\mathrm{Ind}_K^G\mathrm{Ind}_H^KY=\mathrm{Ind}_H^GY.$$
\end{thm}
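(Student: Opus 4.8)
The plan is to derive the theorem from ordinary symplectic reduction \emph{in stages}, by realising the two iterated inductions as reductions of one and the same symplectic manifold by a \emph{direct product} of groups acting through commuting left and right translations, so that the order of reduction is immaterial. Conceptually this is just the associativity of composition of symplectic bimodules together with the fact that $T^*K$, as a $(K,K)$-bimodule under left and right translation, is the identity; but I would carry it out by hand, which also makes the bookkeeping of \cite{[RZ]}'s moment maps transparent.

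First I would unfold the composite object. Put $Z:=\mathrm{Ind}_H^KY=(T^*K\times Y)\sslash H$, a Hamiltonian $K$-space with $K$ acting by left translation on $T^*K$ and moment map $[k^*,y]_H\mapsto k^*\kappa^{-1}$, where $\kappa:=\pi_K(k^*)$. Then $\mathrm{Ind}_K^GZ=(T^*G\times Z)\sslash K$, and since $H$ acts trivially on the $T^*G$ factor, reduction by $H$ commutes with that product, giving $T^*G\times Z\cong(T^*G\times T^*K\times Y)\sslash H$, where $H$ acts by $(p,k^*,y)\mapsto(p,k^*h^{-1},h(y))$. On this space the residual $K$-action still to be reduced is $\bar k\cdot(p,k^*,y)=(p\bar k^{-1},\bar kk^*,y)$: right translation on $T^*G$, left translation on $T^*K$, trivial on $Y$. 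This $K$-action and the displayed $H$-action commute (left and right translations on $T^*K$ commute, and elsewhere the two actions touch disjoint factors), and both are free and proper since the $H$- and $K$-actions on $T^*K$ and the $K$-action on $T^*G$ are. Hence
$$\mathrm{Ind}_K^G\mathrm{Ind}_H^KY=(T^*G\times T^*K\times Y)\sslash(K\times H)=\bigl((T^*G\times T^*K\times Y)\sslash K\bigr)\sslash H.$$

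The one genuinely computational input is the $K$-reduction of $T^*G\times T^*K$ for the diagonal action $(p,k^*)\mapsto(p\bar k^{-1},\bar kk^*)$, whose moment map is $\mu_K(p,k^*)=k^*\kappa^{-1}-(q^{-1}p)|_{\mathfrak k}$, with $q:=\pi(p)$. I would show that $\Theta(p,k^*):=p\kappa$, the right translation of $p\in T_q^*G$ by $\kappa$, is $K$-invariant on $\mu_K^{-1}(0)$ and induces a $(G\times K)$-equivariant symplectomorphism $(T^*G\times T^*K)\sslash K\xrightarrow{\ \sim\ }T^*G$, where $K$ acts on the target by right translation: invariance and bijectivity follow from the group laws ($p\bar k^{-1}\cdot\bar k\kappa=p\kappa$, and the whole of $K$ appears in the $K$-orbit over a given value $p\kappa$), while the symplectomorphism property is the standard check with the canonical $1$-forms of $T^*G$ and $T^*K$. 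Tensoring with $\mathrm{id}_Y$ then yields $(T^*G\times T^*K\times Y)\sslash K\cong T^*G\times Y$, with residual data: $G$ acting by left translation on $T^*G$, and $H$ acting by $(\tilde p,y)\mapsto(\tilde ph^{-1},h(y))$.

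It remains to match the surviving moment maps. Writing $\tilde p:=p\kappa$, $\tilde q:=q\kappa$ and using $(q\kappa)^{-1}(p\kappa)=\mathrm{Ad}^*_\kappa(q^{-1}p)$, a short calculation on $\mu_K^{-1}(0)$ gives $\tilde p\,\tilde q^{-1}=pq^{-1}$ and $(\tilde q^{-1}\tilde p)|_{\mathfrak h}=(\kappa^{-1}k^*)|_{\mathfrak h}$; the latter needs only $\mathrm{Ad}_\kappa(\mathfrak k)=\mathfrak k$, not normality of $H$ in $K$, since restriction to $\mathfrak h$ factors through restriction to $\mathfrak k$. Thus, under $\Theta\times\mathrm{id}_Y$, the residual $G$- and $H$-moment maps become exactly $\phi(\tilde p,y)=\tilde p\,\tilde q^{-1}$ and $\psi(\tilde p,y)=\Psi(y)-(\tilde q^{-1}\tilde p)|_{\mathfrak h}$, i.e.\ precisely the data defining $\mathrm{Ind}_H^GY=(T^*G\times Y)\sslash H$; reducing by $H$ gives the claim. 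I expect the main obstacle to be this last identification: fixing the sign and trivialization conventions of \cite{[RZ]} carefully enough that $\Theta$ is literally a symplectomorphism and that the four moment maps agree on the nose rather than up to a coadjoint shift, the $\mathfrak h$-versus-$\mathfrak k$ restriction being the delicate point that makes the theorem hold with no normality hypothesis on $H\subset K$. The remaining ingredients — reduction commuting with products, reduction in stages for a direct product, and freeness and properness — are routine given Section~3.
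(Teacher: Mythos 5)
Your proposal is correct, but it is worth noting that the paper does not actually prove this theorem: it is imported wholesale from \cite{[RZ]}, and the paper only recycles the structure of Ratiu--Ziegler's argument (the diagram of Fig.~\ref{fig: diagram}, with $M=T^*G\times T^*K\times Y$, $N=T^*G\times Y$ and the maps $r,s,t$) when proving its own Theorem \ref{thm: Frobenius}. Compared with that scheme, you work on the same ambient space and your key map is essentially the same as theirs --- your $\Theta\times\mathrm{id}_Y$, $(p,k^*,y)\mapsto(p\kappa,y)$, is exactly the stages-version of the map $r$ --- but you package the argument differently: you invoke commuting reduction in stages for the product $K\times H$, perform the $K$-reduction first via the explicit identification $(T^*G\times T^*K)\sslash K\cong T^*G$ (the ``$T^*K$ is the identity bimodule'' lemma), and then match the residual $G$- and $H$-moment maps; Ratiu--Ziegler instead reduce by $H$ first and show that $r$, restricted to the joint zero level set, is surjective, equivariant, with fibers the $K$-orbits, hence descends directly to a symplectomorphism $t$. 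Your organization isolates a reusable lemma and makes explicit the genuinely delicate point, namely that the restriction to $\mathfrak h$ must be routed through $\mathfrak k$ (using only $\mathrm{Ad}_\kappa(\mathfrak k)=\mathfrak k$, so no normality of $H$ in $K$ is needed); their one-shot descent avoids appealing to commuting reduction and is the form that the present paper later upgrades diffeologically, since surjectivity of $s$ and the fibers-are-$K$-orbits statement are precisely what the diffeological proof reuses. The one deferred step you should write out is that $\Theta$ is a symplectomorphism with \emph{both} canonical one-forms taken with a plus sign: this does work, because $K$ acts on the right of $T^*G$ and on the left of $T^*K$, so the two momentum terms enter $\mu_K$ with opposite signs and the pullback of the canonical one-form of the target equals $(\theta_{T^*G}+\theta_{T^*K})$ restricted to $\mu_K^{-1}(0)$; with that check in place your proof is complete.
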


\begin{oss}
The equality of the previous theorem means that there exists a diffeomorphism between the manifolds $\mathrm{Ind}_K^G\mathrm{Ind}_H^KY$ and $\mathrm{Ind}_H^GY.$ If we regard manifolds as diffeological spaces endowed with the manifold diffeology, the above diffeomorphism can be viewed as a diffeomorphism between diffeological spaces.
\end{oss}
\section{Symplectic Frobenius Reciprocity}
In this section we are going to prove our result. First, we recall the symplectic reformulation of the space of intertwiners in classical representation theory (see e.g. \cite{[K]}) given in §6 of \cite{[GS1]}.
\begin{defn}
\label{defn: hom}
$$\mathrm{Hom}_G(X_1,X_2):=(X_1^{-}\times X_2)\sslash G,$$ where $X_1^-$ is the Hamiltonian $G$-space $X_1$ endowed with opposite $2$-form and moment map.
\end{defn}

\begin{center}
\begin{figure}
\begin{tikzcd}
\node (1-1) {M}; && \node (1-2) {N}; \\
 &\node (2-1) {\overline{\psi}^{-1}(0)}; \\
&&\node(3-1) {(\overline{\phi}\times\overline{\psi})^{-1}(0)}; &&&\node(3-2) {{\psi}^{-1}(0)};\\
&\node(4-1) [yshift=1.4cm] {M\sslash H}; &\node (4-2) {\overline{\Phi}^{-1}_{M\sslash H}(0)};\\
\\
&&\node(5-1) {(M\sslash H)\sslash K};&&& \node(5-2) {N\sslash H};\\
\ar["r", from=1-1, to=1-2]
\ar[hookleftarrow, "j_1", from=1-1, to=2-1]
\ar[hookleftarrow, "j", from=2-1, to=3-1]
\ar["s", from=3-1, to=3-2]
\ar[hookleftarrow, "j_3", from=1-2, to=3-2]
\ar["\pi_1", from=2-1, to=4-1]
\ar["\pi", from=3-1, to=4-2]
\ar[hookleftarrow, "j_2", from=4-1, to=4-2]
\ar["\pi_2", from=4-2, to=5-1]
\ar["\pi_3", from=3-2, to=5-2]
\ar["t", from=5-1, to=5-2]
\end{tikzcd}
\caption{Commutative diagram of \cite{[RZ]}, used in the proof of Theorems 2.1 and 3.4 of \cite{[RZ]} and Theorem \ref{thm: Frobenius}.}
\label{fig: diagram}
\end{figure}
\end{center}
We are in a position to state the following:
\begin{thm}[Symplectic Frobenius Reciprocity]
\label{thm: Frobenius}
Let $H$ and $G$ be Lie groups with $H\subset G$. Let $X$ be a Hamiltonian $G$-space and $Y$ a Hamiltonian $H$-space. Then 
$$\mathrm{Hom}_G(X,\mathrm{Ind}_H^GY)=\mathrm{Hom}_H(\mathrm{Res}_H^GX,Y),$$ where the equality denotes a {\rm diffeological} diffeomorphism.
\end{thm}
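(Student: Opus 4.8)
The plan is to keep the Ratiu--Ziegler map intact — it is the arrow $t$ at the foot of the commutative diagram of Figure~\ref{fig: diagram} — and to prove that, once every space in that diagram is given its reduction diffeology, $t$ is a diffeological diffeomorphism. Each symplectic reduction occurring in the diagram — namely $M\sslash H$, $(M\sslash H)\sslash K$, $N\sslash H$, and hence also $\mathrm{Ind}_H^GY$, $\mathrm{Hom}_G(X,\mathrm{Ind}_H^GY)$ and $\mathrm{Hom}_H(\mathrm{Res}_H^GX,Y)$ — will carry the reduction diffeology: the subset diffeology on the relevant zero level set, followed by the quotient diffeology under the relevant group. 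With this choice the zero-level inclusions $j_1,j,j_2,j_3$ are inductions, the orbit projections $\pi_1,\pi_2,\pi_3$ (and the corresponding restriction $\pi$ of $\pi_1$) are subductions, and — when a reduction happens to be a manifold, as for $\mathrm{Ind}_H^GY$ by Remark~\ref{oss: action} — this diffeology coincides with the manifold one, so there is no conflict with Theorem~\ref{thm: stages}.

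First I would check that every arrow of Figure~\ref{fig: diagram} is smooth: the $j_i$ are smooth because inductions are, the $\pi_i$ because subductions are, and $r,s$ because they are restrictions — to subset diffeologies in both source and target — of smooth maps between the ambient manifolds, smoothness of $s$ being detected by post-composing with the induction $j_3$ through $j_3\circ s=r\circ j_1\circ j$. The extra input needed is that the two composites issuing from the joint zero level set $(\overline{\phi}\times\overline{\psi})^{-1}(0)$ are \emph{subductions}. For $\pi_2\circ\pi$ this is clear: $\pi_2$ is a subduction, and $\pi$ — the base change of the subduction $\pi_1$ along the inclusion $j_2$ — is again one, since subductions are stable under pullback along inclusions. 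The substantive claim is that $\pi_3\circ s$ is a subduction; since $\pi_3$ already is, it suffices (though it is not formally necessary) to show that $s$ is a subduction, i.e. to lift plots of $\psi^{-1}(0)$ locally through $s$. This is where the constructions recalled from \cite{[W]} come in: one uses the explicit form of $r$ (essentially a partial reduction map between the ambient manifolds) to produce such lifts.

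Granting these two subduction statements, the conclusion is immediate. A surjection is a subduction exactly when its target, as a diffeological space, is the quotient of its source by the partition into fibres, equipped with the quotient diffeology; so $\pi_2\circ\pi$ and $\pi_3\circ s$, being subductions out of the common space $(\overline{\phi}\times\overline{\psi})^{-1}(0)$ that — by the Ratiu--Ziegler bijectivity of $t$ together with $t\circ(\pi_2\circ\pi)=\pi_3\circ s$ — induce the very same partition into fibres, have targets $(M\sslash H)\sslash K$ and $N\sslash H$ that are one and the same diffeological space, $t$ being the canonical identification. Thus $t$ is a diffeomorphism, and matching $(M\sslash H)\sslash K$ and $N\sslash H$ with $\mathrm{Hom}_G(X,\mathrm{Ind}_H^GY)$ and $\mathrm{Hom}_H(\mathrm{Res}_H^GX,Y)$ through the Ratiu--Ziegler construction — an identification assembled from products, restrictions and reductions, each manifestly smooth in diffeology — proves the theorem.

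I expect the main obstacle to be precisely the subduction property of $\pi_3\circ s$: all arrows of the diagram are smooth almost mechanically, and the smoothness of $t$ itself would already drop out, but the reverse direction genuinely requires a plot-lifting argument that cannot be replaced by a naive ``surjective submersion'' statement, since the reduced spaces are in general not manifolds — this is exactly why the conjecture is not obvious. A secondary, purely bookkeeping, difficulty is to make Figure~\ref{fig: diagram} apply verbatim in the Frobenius setting: choosing the master space $M$ (a suitable product of $X^-$, $T^*G$ and $Y$) and the auxiliary reduction groups, and matching the resulting iterated reductions with the two sides of the asserted equality. That step is diffeologically inert, merely reusing the symplectic and set-theoretic content of \cite{[RZ]}.
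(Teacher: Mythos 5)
Your plan follows essentially the same route as the paper's proof: the same diagram (Figure \ref{fig: diagram}), the same choice of subset-plus-quotient diffeologies on all reductions, and the same reduction of the two directions of smoothness of $t$ to plot-lifting through the two composites out of $(\overline{\phi}\times\overline{\psi})^{-1}(0)$. Your packaging via \enquote{two subductions with the same fibres have canonically diffeomorphic targets} is an equivalent repackaging of the paper's explicit two-way plot chase (lift a plot of $(M\sslash H)\sslash K$ through $\pi_2\circ\pi$ and push it down with $\pi_3\circ s$, and conversely), and it needs exactly the same two inputs: $s$ smooth and $s$ a subduction. Your treatment of the smoothness of $r$ and $s$ (ordinary smooth maps between the ambient manifolds, restricted to subsets carrying the subset diffeology) is correct and, if anything, lighter than the paper's Lemma \ref{lem: r smooth}; likewise your appeal to the stability of subductions under restriction to the full preimage of a subset is the same fact the paper uses tacitly for $\pi$.

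The gap is exactly where you predict it: the subduction property of $s$ is asserted, not proved, and \enquote{use the explicit form of $r$ to produce such lifts} is not yet an argument. This is the paper's Lemma \ref{lem: s subduction}, and the missing idea it supplies is a concrete section of $s$: given a plot $P=(P^X,P^Y)$ of $\psi^{-1}(0)$, take $Q=(P^X,(1_K,\Phi\circ P^X),P^Y)$, i.e.\ choose the covector $\Phi(P^X(u))\in\mathfrak{k}^*\cong T^*_{1_K}K$ sitting over the identity. One must then check (i) that $Q$ lands in $(\overline{\phi}\times\overline{\psi})^{-1}(0)$, which follows from $\Psi(P^Y)=\Phi(P^X)|_{\mathfrak h}$, and (ii) that $Q$ is a plot of that zero set in its subset diffeology; the paper does (ii) by verifying that $\Phi\circ P^X$ is a plot of $\mathfrak{k}^*$ with its fine diffeology (Prop.\ 3.12 of \cite{[IZ]}), though it also suffices to note that $Q$ is an ordinary smooth map into $M$ with values in the zero set. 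With this, $s\circ Q=P$ globally and your uniqueness-of-quotients argument closes. One further remark: the paper additionally shows that $t$ matches the reduced diffeological forms, $t^*\omega_{N\sslash H}=\omega_{(M\sslash H)\sslash K}$; your proposal omits this, which is acceptable for the bare claim of a diffeological diffeomorphism but is part of what the paper actually establishes.
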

\begin{proof}
The first part of the proof coincides with the one of Theorem 3.4 of \cite{[RZ]}; we briefly resume the notation and some intermediate results that will be used in the diffeological part of the proof.

Following the usage in \cite{[RZ]}, let us rename $G$ as $K$ for convenience and set $N:=X^-\times Y$, $M:=X^-\times T^*K\times Y$; these definitions allow us to perform a proof similar to the one of the symplectic induction in stages. An $H$-action is defined on N: $$h(x,y)=(h(x),h(y)),$$ 
and the corresponding moment map is $\psi:N\to \mathfrak{h}^*$, 
\begin{equation}
\label{eq: Psi}
\psi(x,y)=\Psi(y)-\Phi(x)|_{\mathfrak{h}};
\end{equation}
a $K\times H$-action is defined on $M$: $$(k,h)(x,\overline{p},y)=(k(x),k\overline{p}h^{-1},h(y)),$$
with moment map $\overline{\phi}\times \overline{\psi}:M\to \mathfrak{k}^*\times \mathfrak{h}^*,$
\begin{equation}
\label{eq: phi}
\overline{\phi}(x,\overline{p},y)=\overline{p}\,\overline{q}^{-1}-\Phi(x)|_{\mathfrak{h}}
\end{equation}
\begin{equation}
\label{eq: psi}
\overline{\psi}(x,\overline{p},y)=\Psi(y)-\overline{q}^{-1}\overline{p}|_{\mathfrak{h}}.
\end{equation}
Let us denote by $j, j_1, j_2, j_3$ the canonical inclusions and by $\pi, \pi_1,\pi_2, \pi_3$ the canonical projections involved in the construction of the reductions. Let 
$\overline{\Phi}_{M\sslash H}:M\sslash H\to \mathfrak{k}^*$ be the moment map for the action of $K$ on the quotient $M\sslash H$.
Then, by Definition \ref{defn: hom}:
$$N\sslash H=(X^-\times Y)\sslash H=\mathrm{Hom}_H(\mathrm{Res}_H^KX,Y);$$
$$(M\sslash H)\sslash K=(X^-\times(T^*K\times Y)\sslash H) \sslash K=$$
$$= (X^-\times \mathrm{Ind}_H^KY)\sslash K=\mathrm{Hom}_K(X, \mathrm{Ind}_H^KY).$$ 
Before giving the diffeological proof, let us recall the definitions and the properties of the maps $r$, $s$ and $t$ of the diagram, which are proved in Theorem 3.4 of \cite{[RZ]}. Set $r:M\to N$ and 
$r(x,\overline{p},y)=(\overline{q}^{-1}(x),y), \overline{p}\in T^*_{\overline{q}}K;$
let $s:(\overline{\phi}\times \overline{\psi})^{-1}(0)\to \psi^{-1}(0)$, defined as the map $r\circ j \circ j_1$, with range restricted to $\psi^{-1}(0)$. Under this restriction it becomes surjective. Moreover, $s$ is equivariant with respect to the $K\times H$ action on $ (\overline{\phi}\times \overline{\psi})^{-1}(0)$ and to the $H$-action on $\psi^{-1}(0)$; furthermore  the fibers of $s$ are $K$-orbits. These facts imply that $s$ descends to a bijection $t: (M\sslash H)\sslash K \to N\sslash H$ such that the diagram is commutative, i.e. $t\circ \pi_2 \circ \pi=\pi_3 \circ s$. These constructions are portrayed in Fig. \ref{fig: diagram}.

Now we can prove that $t$ is a diffeological diffeomorphism, that is, both $t$ and $t^{-1}$ are smooth (in a diffeological sense). The scheme of the proof is similar to the one of Lemma 3.17 and Proposition 3.43 of \cite{[W]}.
Let us prove that $t:(M\sslash H)\sslash K\to N\sslash H$ is smooth: let 
$p:U\to (M\sslash H)\sslash K$ be a plot of $(M\sslash H)\sslash K$, with $U\subseteq {\R}^p, p\in {\N}$. Then, by definition of quotient diffeology, for any $u\in U$ there exists $V\subseteq U$ and $q:V\to \overline{\Phi}^{-1}_{M\sslash H}(0)$, a plot of $\overline{\Phi}^{-1}_{M\sslash H}(0)$, such that $u\in V$ and $p|_V=\pi_2\circ q$. For the same reason, upon further restrictions on $V$, there exists $\rho:V\to (\overline{\phi}\times \overline{\psi})^{-1}(0)$, a plot of $(\overline{\phi}\times \overline{\psi})^{-1}(0)$, such that $q=\pi \circ \rho$. Since the diagram is commutative we get:
$$t\circ p|_V=t\circ \pi_2 \circ \pi \circ \rho=\pi_3\circ s\circ \rho,$$ hence $t$ is smooth if $s$ is smooth. Now, $s$ is smooth if, for any plot $\sigma$ of $(\overline{\phi}\times \overline{\psi})^{-1}(0)$, $s\circ \sigma$ is a plot of $\psi^{-1}(0)$. Since $j$ is an inclusion, it is an injection with respect to the subset diffeology and then
$j\circ \sigma$ is a plot of $\overline{\psi}^{-1}(0)$; moreover, $j_1\circ j\circ \sigma$ is a plot of $M$ and by commutativity of the diagram 
$$r\circ j_1\circ j\circ \sigma= j_3\circ s \circ \sigma.$$ By Proposition 1.31 of \cite{[IZ]},
$s$ is smooth if $r$ is smooth. By Lemma \ref{lem: r smooth} below, $r$ is smooth, then $s$ is smooth and $t$ is smooth.

Now let us prove that $t^{-1}: N\sslash H\to (M\sslash H)\sslash K$ is smooth. Let $p:U\to N\sslash H$ be a plot of $N\sslash H$, with $U\subseteq {\R}^p,\ p\in {\N}$.
By definition of quotient diffeology, for any $u\in U$ there exists $V\subseteq U$ and $q:V\to \psi^{-1}(0)$, a plot of $\psi^{-1}(0)$, such that $u\in V$ and $p|_V=\pi_3\circ q$. By Lemma \ref{lem: s subduction} below, $s$ is a subduction and by proposition 1.48 of \cite{[IZ]} there exists, upon further restrictions of $V$, a plot $\rho: V\to (\overline{\phi}\times \overline{\psi})^{-1}(0)$ of $(\overline{\phi}\times \overline{\psi})^{-1}(0)$ such that $q=s\circ \rho$.  Commutativity of the diagram then yields
$$t^{-1}\circ p|_V=t^{-1}\circ \pi_3\circ q=t^{-1}\circ \pi_3\circ s\circ \rho=\pi_2\circ \pi \circ \rho.$$ whence $t^{-1}$ is smooth, being the projections smooth. Summarizing, the idea of the proof is to use the diffeological properties of the projections and the inclusions to climb up the diagram so that proving that $t$ and $t^{-1}$ are smooth is tantamount to prove that $r$ is smooth and $s$ is a subduction, respectively. \par

Finally, we prove that $t$ preserves the diffeological forms on the quotients. Indeed, $M$ and $N$ can be endowed, as in Theorem 2.1 of \cite{[RZ]}, with a symplectic form, which is also a diffeological form. Passing to quotients,  note that $(M\sslash H)\sslash K$  and $N\sslash H$ are not necessary manifolds under our hypotheses, which is the reason why we cannot assert that $t$ is a diffeomorphism between manifolds. 
However, considering the subsets of a diffeological space and the quotients of a diffeological space, the symplectic reduction can be endowed with a diffeological space structure and a reduced diffeological form. Indeed, let $(M, \omega, \phi)$ be a Hamiltonian $G$-space. Even if $0\in \mathfrak{g}^*$ is not a regular value with respect to $\phi$, $\phi^{-1}(0)\subset M$ can be endowed with the subset diffeology. Moreover, we can consider $\phi^{-1}(0)/G=: M\sslash G$ as a diffeological space, endowed with the quotient diffeology. Let $j: \phi^{-1}(0)\hookrightarrow M$ be the inclusion and $\pi: \phi^{-1}(0)\to \phi^{-1}(0)/G$ the projection  onto  the quotient. Since $\pi$ is a subduction, there exists a diffeological form on the quotient $\omega_{M\sslash G}$ such that $j^*\omega=\pi^*\omega_{M\sslash G}$ if and only if, for any $P, Q$ plots of $\phi^{-1}(0)$ such that $\pi\circ P=\pi\circ Q$, one has $j^*\omega(P)=j^*\omega(Q)$. From the condition on the projection of the plots it follows that $P$ and $Q$ differ only by the action of $G$; being $\omega$ $G$-invariant, we  indeed have $j^*\omega(P)=j^*\omega(Q)$, hence there exists $\omega_{M\sslash G}$ on $M\sslash G$ such that $j^*\omega=\pi^*\omega_{M\sslash G}$. Thus there exists a diffeological form on the reduction, defined in the same way as the reduced symplectic form. Therefore, we can proceed as in the proof of Theorem 2.1 of \cite{[RZ]}: we have $s^*j_3^*\omega_N=j^*j_1^*\omega_M$ and by commutativity of the diagram $\pi^*\pi_2^*t^*\omega_{N\sslash H}=\pi^*\pi_2^*\omega_{(M\sslash H)\sslash K}.$ Since $\pi_2\circ \pi$ is surjective, its pullback is injective and we have $t^*\omega_{N\sslash H}=\omega_{(M\sslash H)\sslash K}$, as diffeological forms.
\end{proof}

\begin{oss}
In the induction in stages situation $M$ and $N$ were defined as $T^*G\times T^*K\times Y$ and $T^*G\times Y$, respectively. The very presence of $T^*G$ and $T^*K$ made the group actions of $H$ and $K$ free and proper (as in Remark \ref{oss: action}), thus paving the way to quotient manifold structures. In the proof of the previous Theorem, the definitions of $M$ and $N$ do not involve the cotangent spaces $T^*G$ and $T^*K$, thus the action is not necessarily free and proper and the quotients are no longer manifolds. 
\end{oss}

The above result strengthens Theorem 3.4 of \cite{[RZ]}, in which the equality was set-theoretical.

We are now going to prove the missing lemmas.

\begin{lem}
\label{lem: r smooth}
The map $r:M\to N$ is smooth.
\end{lem}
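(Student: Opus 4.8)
The plan is to exhibit $r$ as a composition of manifestly smooth maps of manifolds, and then to use the fact that an ordinarily smooth map between finite-dimensional manifolds is automatically smooth in the diffeological sense (the diffeology of a manifold being the one generated by its charts, by the Definition above). Recall that $M = X^-\times T^*K\times Y$, $N = X^-\times Y$ and $r(x,\overline{p},y) = (\overline{q}^{\,-1}(x),y)$, where $\overline{q} = \pi_K(\overline{p})\in K$ is the base point of the covector $\overline{p}$ under the canonical projection $\pi_K:T^*K\to K$, the symbol $\overline{q}^{\,-1}$ denotes the group inverse in $K$, and $\overline{q}^{\,-1}(x)$ is the value on $x$ of the $K$-action on $X$ (note that the underlying manifold, and hence the diffeology, of $X^-$ agrees with that of $X$, only the $2$-form and the moment map being reversed).

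First I would decompose $r$ as the composite
$$X^-\times T^*K\times Y \xrightarrow{\ \mathrm{id}\times(\iota\circ\pi_K)\times\mathrm{id}\ } X^-\times K\times Y \xrightarrow{\ \mathrm{sw}\ } K\times X^-\times Y \xrightarrow{\ \alpha\times\mathrm{id}\ } X^-\times Y,$$
where $\iota:K\to K$ is group inversion, $\mathrm{sw}$ permutes the first two factors, and $\alpha:K\times X\to X$ is the action map. Each constituent is smooth: $\pi_K$ is the smooth cotangent bundle projection, $\iota$ is smooth since $K$ is a Lie group, $\alpha$ is smooth since $X$ is a Hamiltonian $K$-space, and the swap and the identities are smooth. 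Hence $r$ is a smooth map of manifolds, and therefore sends plots to plots: composing a plot $P:U\to M$ with $r$ yields locally a composition of $\mathcal{C}^{\infty}$ maps, which is again $\mathcal{C}^{\infty}$, so $r$ is smooth in the diffeological sense.

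Equivalently, and in the style of the rest of this section, one may argue directly via the product diffeology: a plot of $M$ is a triple $(P_X,P_{T^*K},P_Y)$ of plots of $X^-$, $T^*K$ and $Y$, and
$$r\circ(P_X,P_{T^*K},P_Y) = \bigl(\alpha\circ(\iota\circ\pi_K\circ P_{T^*K},\,P_X),\ P_Y\bigr)$$
has both components equal to plots of the corresponding factors of $N$, because $\pi_K$, $\iota$ and $\alpha$ are smooth and plots are stable under composition with smooth maps; thus $r\circ P$ is a plot of $N$. There is essentially no obstacle here; the only points requiring a moment's care are the identification of the assignment $\overline{p}\mapsto \overline{q}$ with the smooth bundle projection $\pi_K$, and the remark that replacing $X$ by $X^-$ leaves smoothness untouched.
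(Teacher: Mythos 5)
Your argument is correct, and it reaches the conclusion by a cleaner route than the paper. The paper works at the level of plots: it equips $M=X^-\times T^*K\times Y$ with the product diffeology, writes a plot as $(P^X,P^T,P^Y)$, further decomposes the $T^*K$-component as $P^T(u)=(P^K(u),P^{\Omega}(u)(P^K(u)))$ with $P^K$ a plot of $K$ (Lie group diffeology) and $P^{\Omega}$ a plot of $\Omega^1(K)$ with a functional diffeology, and then concludes by observing that $r\circ P(u)=(P^K(u)^{-1}(P^X(u)),P^Y(u))$ and that the $K$-action on $X$, being classically smooth, is smooth as an action of the diffeological group $K$. You instead factor $r$ as $(\alpha\times\mathrm{id})\circ\mathrm{sw}\circ(\mathrm{id}\times(\iota\circ\pi_K)\times\mathrm{id})$ and invoke the standard fact that a classically smooth map between finite-dimensional manifolds is automatically smooth for the manifold (equivalently, product-of-manifolds) diffeology, since here $M$ and $N$ are genuine manifolds and the manifold diffeology consists exactly of the classically smooth parametrizations. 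The two proofs share the same essential content --- the only nontrivial ingredient is smoothness of the bundle projection, the inversion and the action --- but your factorization through $\pi_K$, $\iota$ and $\alpha$ sidesteps the paper's somewhat delicate representation of plots of $T^*K$ via globally defined $1$-forms and the functional diffeology on $\Omega^1(K)$, which is the least transparent step of the published argument; the paper's plot-level formulation, on the other hand, is phrased so that it transfers verbatim to the prequantum setting later on, where the same lemma is reused. Your second, plot-wise paragraph recovers exactly that flexibility, so nothing is lost.
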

\begin{proof}
Let us consider $M$ and $N$ as in the proof of the previous theorem and the corresponding $K\times H$ and $H$-action, respectively. The map
$r:M\to N$ maps $(x,\overline{p},y)\mapsto (\overline{q}^{-1}(x),y)$, where $\overline{p}\in T^*_{\overline{q}}K$. 
Let us consider a plot $P$ of $M$, endowed with the product diffeology; then $P$ can be defined as follows: $P: U\to M$, $U\subset {\R}^p, p\in {\N}$, $P(u)=(P^X(u), P^T(u), P^Y(u))$,  where $P^X, P^T, P^Y$ are plots of $X, T^*K, Y$, respectively. In particular, since $T^*K=\{(k,\alpha(k))|k\in K, \alpha\in \Omega^1(K)\}$, it can also be endowed with the product diffeology; furthermore, the first factor can be equipped with a Lie group diffeology (§7.1 of \cite{[IZ]}), which is both a group and a manifold diffeology, and the second one with a functional diffeology. These considerations allow us to write the plot of $T^*K$ as $P^T(u)=(P^K(u),P^{\Omega}(u)(P^K(u)))$, where  $P^K$ and $P^{\Omega}$ are plots of $K$ and $\Omega^1(K)$, respectively.  The map $r$ is smooth if $r\circ P$ is a plot of $N$:
$$r\circ P (u)=r((P^X(u), P^T(u), P^Y(u)))=(P^K(u)^{-1}(P^X(u)), P^Y(u)).$$  This is ascertained as follows: first, notice that
 $r$ maps identically the third component of $P$ to the second one of the image, so we only have to focus on the mapping of the first two components of $P$ to the first one of the image. The latter is smooth because, being the action of $K$ smooth on $X$ in the usual sense, the action of $K$, viewed as a diffeological group acting on $X$, is also smooth. Hence $r$ is smooth.
\end{proof}

\begin{lem}
\label{lem: s subduction}
The map $s:(\overline{\phi}\times \overline{\psi})^{-1}(0)\to \psi^{-1}(0)$ is a subduction.
\end{lem}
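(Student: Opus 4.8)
The plan is to produce an explicit global smooth section of $s$ and then invoke the elementary fact that a smooth surjection admitting a smooth section is automatically a subduction: if $\sigma$ is a smooth right inverse of a smooth map $f$, then for every plot $P$ of the target one has $f\circ(\sigma\circ P)=P$ with $\sigma\circ P$ a plot of the source, so $P$ lies in the pushforward diffeology of the source along $f$; combined with the smoothness of $f$ this forces that pushforward to coincide with the diffeology of the target. Recall that here $N=X^-\times Y$, $M=X^-\times T^*K\times Y$, and $s$ is the corestriction to $\psi^{-1}(0)$ of $r(x,\overline p,y)=(\overline q^{-1}(x),y)$, where $\overline q=\pi_K(\overline p)\in K$ is the base point of $\overline p$; that $s$ is smooth has already been established in the proof of Theorem~\ref{thm: Frobenius} via Lemma~\ref{lem: r smooth}. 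The candidate section is
$$\sigma:\psi^{-1}(0)\longrightarrow(\overline\phi\times\overline\psi)^{-1}(0),\qquad\sigma(x,y)=(x,\Phi(x),y),$$
where $\Phi(x)\in\mathfrak k^*$ is regarded as the element of the cotangent fibre $T^*_eK$ over the identity $e\in K$.

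First I would check that $\sigma$ is well defined, i.e.\ that $\sigma(x,y)\in(\overline\phi\times\overline\psi)^{-1}(0)$ whenever $(x,y)\in\psi^{-1}(0)$. Since the cotangent slot of $\sigma(x,y)$ sits over $\overline q=e$, the moment-map formula \eqref{eq: phi} gives $\overline\phi(\sigma(x,y))=0$ (the two $\mathfrak k^*$-terms cancel), while \eqref{eq: psi} gives $\overline\psi(\sigma(x,y))=\Psi(y)-\Phi(x)|_{\mathfrak h}$, which vanishes precisely because $(x,y)\in\psi^{-1}(0)$ means $\Psi(y)=\Phi(x)|_{\mathfrak h}$ by \eqref{eq: Psi}. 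Moreover $r(\sigma(x,y))=(e^{-1}(x),y)=(x,y)$, so $s\circ\sigma=\mathrm{id}_{\psi^{-1}(0)}$ (this also re-proves the surjectivity of $s$ used in Theorem~\ref{thm: Frobenius}). Next I would verify that $\sigma$ is smooth in the diffeological sense: given a plot $P$ of $\psi^{-1}(0)$, the subset and product diffeologies let us write $P=(P^X,P^Y)$ with $P^X,P^Y$ plots of $X,Y$, so that $\sigma\circ P$ is the parametrization $u\mapsto(P^X(u),\Phi(P^X(u)),P^Y(u))$ of $M$. Its middle component is a plot of $T^*K$, since $\Phi:X\to\mathfrak k^*$ is smooth as a map of manifolds and $\mathfrak k^*\cong T^*_eK\hookrightarrow T^*K$ is a smooth embedding of manifolds (equivalently, in the left trivialization $T^*K\cong K\times\mathfrak k^*$ it is the product of the constant plot $e$ and a plot of $\mathfrak k^*$). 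Hence $\sigma\circ P$ is a plot of the product $M$ taking values in $(\overline\phi\times\overline\psi)^{-1}(0)$, so it is a plot for the subset diffeology there, and $\sigma$ is smooth.

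It then follows that every plot $P$ of $\psi^{-1}(0)$ equals $s\circ(\sigma\circ P)$ for the plot $\sigma\circ P$ of $(\overline\phi\times\overline\psi)^{-1}(0)$, so $P$ lies in the pushforward diffeology $s_*(\mathcal D)$, where $\mathcal D$ denotes the subset diffeology of $(\overline\phi\times\overline\psi)^{-1}(0)$; since $s$ is smooth, $s_*(\mathcal D)$ is in turn contained in the diffeology of $\psi^{-1}(0)$, so the two coincide and, being surjective, $s$ is a subduction. The scheme mirrors Lemma~3.17 of \cite{[W]}. I expect the only genuinely non-routine point to be pinning down the right section, namely the moment-map computation placing $\sigma(x,y)$ in the double zero set, where the vanishing of the $\mathfrak h^*$-component is exactly the constraint cutting out $\psi^{-1}(0)$, together with the identification $\mathfrak k^*\cong T^*_eK$ needed for the smoothness of $\sigma$; once the section is in hand the rest is formal diffeology, and no global obstruction can arise because $\sigma$ only involves the cotangent fibre over the single point $e\in K$.
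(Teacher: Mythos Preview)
Your proof is correct and rests on the same key construction as the paper's: both arguments exhibit the explicit global section $\sigma(x,y)=(x,\Phi(x),y)$ with $\Phi(x)\in T^*_eK\cong\mathfrak{k}^*$, check via \eqref{eq: phi}--\eqref{eq: psi} that it lands in $(\overline\phi\times\overline\psi)^{-1}(0)$, and verify that composing a plot of $\psi^{-1}(0)$ with $\sigma$ yields a plot of the source. Your framing---write down $\sigma$ directly and invoke ``smooth surjection with smooth section $\Rightarrow$ subduction''---is slightly more streamlined than the paper's, which reaches the same formula for the lift by positing a generic $Q$ with $s\circ Q=P$ and working backwards, but the substance is identical.
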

\begin{proof}
We already know from the proof of Theorem 3.4 of \cite{[RZ]} that $s$ is surjective onto $\psi^{-1}(0)$ and from the proof of Theorem \ref{thm: Frobenius}, combined with Lemma \ref{lem: r smooth}, $s$ is also smooth; then, proving that $s$ is a subduction is tantamount to prove the existence, for any plot $P: U\to \psi^{-1}(0)$ of $\psi^{-1}(0)$, of a plot $Q: V\to (\overline{\phi}\times \overline{\psi})^{-1}(0)$ of $(\overline{\phi}\times \overline{\psi})^{-1}(0)$ such that $P|_V=s\circ Q$, where $V\subseteq U$. Let $P: U\to \psi^{-1}(0), U\subseteq {\R}^p, p\in {\N}$ be a plot of 
$$\psi^{-1}(0)=\{(x,y)\in X^{-}\times Y|\Psi(y)=\Phi(x)|_{\mathfrak{h}}\},$$
which can be endowed with the product diffeology and each component with the manifold diffeology.
Suppose that $Q:V\to (\overline{\phi}\times \overline{\psi})^{-1}(0), V\subseteq U$ is a plot of $(\overline{\phi}\times \overline{\psi})^{-1}(0)$ such that $s\circ Q=P|_V$. Since $(\overline{\phi}\times \overline{\psi})^{-1}(0)\subset M$, $Q(v)=(Q^X(v), Q^T(v),Q^Y(v))$, where $Q^X, Q^T, Q^Y$ are plots of $X, T^*K, Y$ and, as before, $$Q^T(v)=(Q^K(v), Q^{\Omega}(v)(Q^K(v)))=:(\overline{q}(v),\overline{p}(v)),$$ where $\overline{p}(v)\in T^*_{q(v)}K$.
Let $r\in V$, then $$s\circ Q(r)=P(r) \iff (\overline{q}(r)^{-1}(Q^X(r)),Q^Y(r))=(P^X(r),P^Y(r)) $$
$$\iff \overline{q}(r)^{-1}(Q^X(r))=P^X(r) \mbox{ and } Q^Y(r)=P^Y(r)$$
$$\iff Q^X(r)=\overline{q}(r)(P^X(r))\mbox{ and } Q^Y(r)=P^Y(r).$$
Now we can evaluate equation (\ref{eq: Psi}) in $P$, and equations (\ref{eq: phi}) and (\ref{eq: psi}) in $Q$. Since the latter are plots of $\psi^{-1}(0)$ and $(\overline{\phi}\times \overline{\psi})^{-1}(0)$, respectively, we find successively:
\begin{equation}
\label{eq: 1Psi}
\Psi(P^Y(r))=\Phi(P^X(r))|_{\mathfrak{h}}
\end{equation}
\begin{equation}
\label{eq: 1Phi}
\overline{p}(r)\overline{q}(r)^{-1}=\Phi(Q^X(r))=\Phi(\overline{q}(r)(P^X(r))=\overline{q}(r)(\Phi(P^X(r)))
\end{equation}
\begin{equation}
\label{eq: 2Psi}
\Psi(P^Y(r))=\Psi(Q^Y(r))=\overline{q}(r)^{-1}\overline{p}|_{\mathfrak{h}}\implies \Phi(P^X(r))|_{\mathfrak{h}}=\overline{q}(r)^{-1}\overline{p}|_{\mathfrak{h}}.
\end{equation}
Then it follows from (\ref{eq: 1Phi}) that 
$$\Phi(P^X(r))=\overline{q}(r)^{-1}\overline{p}(r)\overline{q}(r)^{-1},$$ 
and substituting in (\ref{eq: 2Psi}) we get 
$$\overline{q}(r)^{-1}\overline{p}(r)|_{\mathfrak{h}}\overline{q}(r)^{-1}=\overline{q}(r)^{-1}\overline{p}(r)|_{\mathfrak{h}}.$$ 
Then 
$\overline{p}(r)|_{\mathfrak{h}}\overline{q}(r)^{-1}=\overline{p}(r)|_{\mathfrak{h}}$, leading to
$\overline{q}(r)^{-1}=1_K$. This holds for all $r\in V$, which implies $\overline{p}(r)=\Phi(P^X(r))$ and $P^X(r)=Q^X(r)$.
Hence $$Q(r)=(P^X(r),(1_K,\Phi(P^X(r))), P^Y(r)),$$ and $V$ can be taken equal to $U$. Now we have to prove that the above determined $Q$  is indeed a plot of $(\overline{\phi}\times \overline{\psi})^{-1}(0)$. If we consider the product diffeology on $(\overline{\phi}\times \overline{\psi})^{-1}(0)$, the first and the third component of $Q$ are, by definition, plots of $X$ and $Y$, respectively; hence we are left to prove that $\Phi(P^X(r))$ is a plot of $\mathfrak{k}^*$ with respect to a functional diffeology. If we consider $\mathfrak{k}$ and ${\R}$ endowed with their fine vector space diffeology, 
by Proposition 3.12 of \cite{[IZ]},
$\Phi(P^X(r))$ is a plot of $\mathfrak{k}^*$. Indeed, if we take $F$ an $n$-dimensional vector subspace of $\mathfrak{k}$, $\Phi(P^X(r))|_F$ writes
$$\Phi(P^X(r))|_F=\sum_{i=1}^{n} a_i(P^X(r))v_i^*,$$ where $(v_1,...,v_n)$ is any basis of $F$, $a_i\in \mathcal{C}^{\infty}(X)$ and $P^X$ is a plot of the manifold diffeology of $X$. In particular, since $a_i\in \mathcal{C}^{\infty}(X)$ and $P^X$ is a plot of $X$, the coefficients are smooth parametrizations of ${\R}$, hence $\Phi(P^X(r))$ is a plot of $\mathfrak{k}^*$. In this case, \enquote{smooth} refers both to the usual and to the diffeological case, since the finest diffeology on ${\R}$ coincides with the smooth one by Exercise 66 of \cite{[IZ]}.
This concludes the proof of the lemma and that of Theorem 4.1.
\end{proof}

\section{Prequantum $G$-spaces}
In this section we recall the prequantum $G$-space framework of \cite{[RZ]}, again in view of extending the corresponding FR to the diffeological context.
\begin{defn} {\cite{[S3]}).}
A \textit{prequantum manifold} $\tilde{X}$ is a manifold endowed with a differential 1-form $\overline{\omega}$ such that, setting $\sigma:=d\overline{\omega}$, the following hold:
$$\mathrm{dim}(\mathrm{ker}\sigma)=1$$
$$\mathrm{dim}(\mathrm{ker}\overline{\omega}\cap\mathrm{ker}\sigma)=0.$$
Moreover, there exists a $U(1)$-action on $\tilde{X}$ such that:
$$z(\tilde{x})=\tilde{x}\iff z=1, \forall z\in U(1), \tilde{x}\in \tilde{X},$$
and a vector field $i$ on $\tilde{X}$, called \textit{Reeb vector field}, such that
$$\sigma(i(\tilde{x}))=0, \,\overline{\omega}(i(\tilde{x}))=1, \forall \tilde{x}\in \tilde{X}.$$
\end{defn}

The definitions of moment map, dual space, product and induction can be rephrased in the prequantum framework.

\begin{propdef}
Let $(\tilde{X}, \overline{\omega})$ be a prequantum manifold. Then $(\tilde{X},d\overline{\omega})$ is a presymplectic manifold whose null leaves are the orbits of $U(1)$ and $d\overline{\omega}$ restricts to a symplectic form $\underline{\omega}$ on the quotient $X=\tilde{X}/U(1)$. If a Lie group $G$ acts on $\tilde{X}$ and preserves $\overline{\omega}$, then it commutes with $U(1)$ and the equivariant moment map 
$$\Phi: \tilde{X}\to \mathfrak{g}^*$$
$$\langle \Phi(\tilde{x}), Z\rangle=\overline{\omega}(Z(\tilde{x})), \tilde{x}\in \tilde{X}, Z\in \mathfrak{g}$$
descends to an equivariant moment map $\underline{\Phi}:X\to \mathfrak{g}^*$. Thus $X$ becomes a Hamiltonian $G$-space prequantized by the prequantum $G$-space $(\tilde{X},\overline{\omega})$ (5.2 of \cite{[RZ]}).
\end{propdef}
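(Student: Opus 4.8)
The plan is to establish the five assertions in turn, reducing each to standard facts about closed $2$-forms of constant rank, principal bundles, and moment maps; throughout I would use that the $U(1)$-action of the definition preserves $\overline{\omega}$ and has the Reeb field $i$ as its fundamental vector field (this is part of Souriau's notion, \cite{[S3]}). First, $\sigma:=d\overline{\omega}$ is closed and, since $\dim\ker\sigma=1$, of constant rank, so $(\tilde{X},\sigma)$ is presymplectic and $\ker\sigma$ is an integrable line field. The conditions $\sigma(i)=0$, $\overline{\omega}(i)=1$ and $\ker\overline{\omega}\cap\ker\sigma=0$ force $i$ to be a nowhere-vanishing section of $\ker\sigma$, hence a frame for it; as $i$ also generates the free $U(1)$-action, the $U(1)$-orbits are exactly the (compact) integral curves of $i$, that is, the null leaves of $\sigma$.

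Since $U(1)$ is compact, the free action is proper, so $X=\tilde{X}/U(1)$ is a manifold and $\pi\colon\tilde{X}\to X$ a principal $U(1)$-bundle. The form $\sigma$ is $U(1)$-invariant and its kernel is the vertical distribution, hence $\sigma$ is basic: there is a unique $2$-form $\underline{\omega}$ on $X$ with $\pi^{*}\underline{\omega}=\sigma$. It is closed because $\pi^{*}d\underline{\omega}=d\sigma=0$ and $\pi^{*}$ is injective, and nondegenerate because any lift of a $v$ with $\underline{\omega}(v,\cdot)=0$ lies in $\ker\sigma$, hence is vertical, hence $v=0$; so $\underline{\omega}$ is symplectic.

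For the $G$-part, note that $i$ is intrinsic to $\overline{\omega}$, being the unique vector field with $\iota_{i}d\overline{\omega}=0$ and $\iota_{i}\overline{\omega}=1$; so any $\overline{\omega}$-preserving diffeomorphism fixes $i$, whence every $g\in G$ commutes with the flow of $i$, that is, the $G$- and $U(1)$-actions commute and the $G$-action descends to $X$. Writing $Z_{\tilde{X}}$ for the fundamental vector field of $Z\in\mathfrak{g}$, Cartan's formula gives $\mathcal{L}_{i}\overline{\omega}=d(\iota_{i}\overline{\omega})+\iota_{i}d\overline{\omega}=0$, and $[i,Z_{\tilde{X}}]=0$ (since $G$ fixes $i$), so $i(\overline{\omega}(Z_{\tilde{X}}))=(\mathcal{L}_{i}\overline{\omega})(Z_{\tilde{X}})+\overline{\omega}([i,Z_{\tilde{X}}])=0$; thus $\langle\Phi,Z\rangle$ is $U(1)$-invariant and $\Phi=\underline{\Phi}\circ\pi$ for a unique $\underline{\Phi}\colon X\to\mathfrak{g}^{*}$. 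Equivariance of $\underline{\Phi}$ follows from that of $\Phi$ --- which holds because $G$ preserves $\overline{\omega}$, by the classical Souriau argument --- together with $G$-equivariance of $\pi$. Finally, from $\mathcal{L}_{Z_{\tilde{X}}}\overline{\omega}=0$ and Cartan's formula one gets $\iota_{Z_{\tilde{X}}}\sigma=-d(\iota_{Z_{\tilde{X}}}\overline{\omega})=-d\langle\Phi,Z\rangle$; pushing this identity down through $\pi$ (using $\pi^{*}\underline{\omega}=\sigma$, $\underline{\Phi}\circ\pi=\Phi$, $Z_{\tilde{X}}$ projecting to $\underline{Z}_{X}$, and injectivity of $\pi^{*}$) yields $\iota_{\underline{Z}_{X}}\underline{\omega}=-d\langle\underline{\Phi},Z\rangle$, so $\underline{\Phi}$ is a moment map for the $G$-action on $(X,\underline{\omega})$ (with the sign convention of \cite{[RZ]}). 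Assembling, $(X,\underline{\omega},\underline{\Phi})$ is a Hamiltonian $G$-space, prequantized by $(\tilde{X},\overline{\omega})$ by definition.

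The only genuinely non-formal step --- and the one I would take care over --- is the bridge between the abstract $U(1)$-action in the definition and the Reeb field: one must invoke (from \cite{[S3]}) that this action preserves $\overline{\omega}$ and has $i$ as its fundamental vector field, so that \enquote{null leaf $=$ $U(1)$-orbit} and the commutation of the $G$- and $U(1)$-actions hold on the nose. Once that is granted, everything else is Cartan calculus together with descent along the principal bundle $\pi$.
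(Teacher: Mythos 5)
Your argument is correct, and it is worth noting that the paper itself offers no proof of this Proposition/Definition at all: it simply defers to \S 5.2 of \cite{[RZ]} (as it does elsewhere with citations to \cite{[IZ]}). What you have written is essentially the standard Souriau/Ratiu--Ziegler argument reconstructed in full: constant-rank presymplectic structure, the Reeb field framing $\ker\sigma$, descent of the basic form along the principal $U(1)$-bundle $\pi\colon\tilde{X}\to X$, and Cartan calculus plus $G$-equivariance of $\pi$ to push $\Phi$ down to $\underline{\Phi}$. Each of these steps checks out (invariance of $\langle\Phi,Z\rangle$ via $\mathcal{L}_{i}\overline{\omega}=0$ and $[i,Z_{\tilde{X}}]=0$, nondegeneracy of $\underline{\omega}$ from $\ker\sigma$ being exactly the vertical distribution, properness from compactness of $U(1)$). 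The one point you flag as non-formal is indeed the only place where something must be imported: the definition as stated in this paper does not say that the $U(1)$-action preserves $\overline{\omega}$ or that its fundamental vector field is $i$, and without that convention the identifications \enquote{null leaves $=$ $U(1)$-orbits} and \enquote{$G$ commutes with $U(1)$} do not follow from the listed axioms alone. You correctly attribute this to Souriau's original notion in \cite{[S3]} (it is also how \cite{[RZ]} sets things up), so the appeal is legitimate; your proof buys a self-contained verification where the paper only cites, at the cost of having to make that background convention explicit.
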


\begin{defn} 
We define $\tilde{X}^- $ as the $G$-space that coincides with $\tilde{X}$, but it is endowed with opposite 1-form $-\overline{\omega}$ and, consequently, with opposite Reeb field and $U(1)$-action; it prequantizes $(X^-, -\underline{\omega},-\underline{\Phi})$ (5.3 of \cite{[RZ]}).
\end{defn}

\begin{defn}
Let $\tilde{X}_1$ and $\tilde{X}_2$ be prequantum $G$-spaces. Then $\tilde{X}_1\times \tilde{X}_2$, endowed with the diagonal $G$-action, is a $U(1)\times U(1)$-space. Moreover, the orbits of the action of $\Delta:=\{(z^{-1},z)|z\in U(1)\}$ are the characteristic leaves of the sum of the 1-forms of the prequantum spaces: $\overline{\omega}_1+\overline{\omega}_2$. Passing to the quotient we obtain 
$\tilde{X}_1\boxtimes \tilde{X}_2:=(\tilde{X}_1\times \tilde{X}_2)/\Delta$ and this space prequantizes $X_1\times X_2$. Notice that, by the previous definition, the $\Delta$-action on $\tilde{X}_1^-\times \tilde{X}_2$ is $z(\tilde{x}_1,\tilde{x}_2)=(z(\tilde{x}_1),z(\tilde{x}_2))$ (5.4 of \cite{[RZ]}).
\end{defn}

\begin{propdef}
Let $G$ be a Lie group that acts freely and properly on $\tilde{X}$ and let $L:=\Phi^{-1}(0)$. Being $\Phi$ a moment map, the tangent space to the orbit through $\tilde{x}$ verifies $\mathfrak{g}(\tilde{x})\subset \mathrm{Ker}(\overline{\omega}|_L)\cap\mathrm{Ker}(d\overline{\omega}|_L)$. Due to the $G$-invariance of $\overline{\omega}|_L$, it descends to a contact 1-form on the quotient $\tilde{X}\sslash G:=\Phi^{-1}(0)/G$. The latter prequantizes the symplectic reduction $X\sslash G=\underline{\Phi}^{-1}(0)/G$ (5.5 of \cite{[RZ]}).
\end{propdef}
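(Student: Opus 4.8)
The plan is to verify the three assertions of the statement in turn — the inclusion $\mathfrak{g}(\tilde{x})\subset\mathrm{Ker}(\overline{\omega}|_L)\cap\mathrm{Ker}(d\overline{\omega}|_L)$, the descent of $\overline{\omega}|_L$ to a genuine $1$-form on $\tilde{X}\sslash G$, and the fact that this reduced form is a contact form prequantizing $X\sslash G$ — by transposing the Marsden--Weinstein reduction argument to the presymplectic/contact setting, essentially reproducing $5.5$ of \cite{[RZ]}. Write $\sigma:=d\overline{\omega}$ and let $Z_{\tilde{X}}$ denote the fundamental vector field of $Z\in\mathfrak{g}$, so that $\mathfrak{g}(\tilde{x})=\{Z_{\tilde{X}}(\tilde{x}):Z\in\mathfrak{g}\}$; since $\Phi$ is equivariant, $G$ preserves $L=\Phi^{-1}(0)$ and every $Z_{\tilde{X}}$ is tangent to $L$. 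The delicate points will be the regular-value statement for $\Phi$ and the bookkeeping relating the iterated quotients $\tilde{X}\sslash G$ and $X\sslash G$.

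For the inclusion, the defining identity $\langle\Phi,Z\rangle=\iota_{Z_{\tilde{X}}}\overline{\omega}$ gives $\overline{\omega}(Z_{\tilde{X}})|_L=\langle\Phi,Z\rangle|_L=0$, whence $\mathfrak{g}(\tilde{x})\subset\mathrm{Ker}(\overline{\omega}|_L)$; and Cartan's formula together with the $G$-invariance of $\overline{\omega}$ yields $\iota_{Z_{\tilde{X}}}\sigma=\mathcal{L}_{Z_{\tilde{X}}}\overline{\omega}-d(\iota_{Z_{\tilde{X}}}\overline{\omega})=-d\langle\Phi,Z\rangle$, which annihilates $T_{\tilde{x}}L$ because $\langle\Phi,Z\rangle\equiv0$ on $L$; hence $\sigma(Z_{\tilde{X}}(\tilde{x}),v)=0$ for every $v\in T_{\tilde{x}}L$, i.e. $\mathfrak{g}(\tilde{x})\subset\mathrm{Ker}(\sigma|_L)$.

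Next I would show that $0$ is a regular value of $\Phi$, so that $L$ is an embedded submanifold and, $G$ acting freely and properly, $\tilde{X}\sslash G=L/G$ is a smooth manifold with $\pi\colon L\to L/G$ a surjective submersion. Indeed, the identity just derived identifies the dual of $d\Phi_{\tilde{x}}$ with $Z\mapsto-\sigma(Z_{\tilde{X}}(\tilde{x}),\cdot)$, so $d\Phi_{\tilde{x}}$ fails to be onto exactly when some nonzero $Z\in\mathfrak{g}$ has $Z_{\tilde{X}}(\tilde{x})\in\mathrm{Ker}\,\sigma_{\tilde{x}}=\R\,i(\tilde{x})$; writing $Z_{\tilde{X}}(\tilde{x})=\lambda\,i(\tilde{x})$ and applying $\overline{\omega}$, the relations $\overline{\omega}(i)=1$ and $\overline{\omega}(Z_{\tilde{X}})|_L=0$ force $\lambda=0$, hence $Z_{\tilde{X}}(\tilde{x})=0$ and $Z=0$ by freeness. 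Since $\overline{\omega}|_L$ is $G$-invariant and, by the inclusion above, kills the fibres of $\pi$, it is basic: there is a unique $1$-form $\alpha$ on $\tilde{X}\sslash G$ with $\pi^*\alpha=\overline{\omega}|_L$, and accordingly $\pi^*(d\alpha)=(d\overline{\omega})|_L=\sigma|_L$.

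It remains to check that $(\tilde{X}\sslash G,\alpha)$ is a prequantum manifold prequantizing $X\sslash G$. The $U(1)$-action commutes with the $G$-action and preserves $\overline{\omega}$; its generator $i$ is tangent to $L$, since $\mathcal{L}_i\langle\Phi,Z\rangle=\iota_i d\langle\Phi,Z\rangle=-\iota_i\iota_{Z_{\tilde{X}}}\sigma=\iota_{Z_{\tilde{X}}}\iota_i\sigma=0$, so $\Phi$ is $U(1)$-invariant, $L$ is $U(1)$-saturated, and both $U(1)$ and $i$ descend to $\tilde{X}\sslash G$, yielding $\bar{i}$ with $\iota_{\bar{i}}\alpha=1$ and $\iota_{\bar{i}}d\alpha=0$. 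A short linear computation — using the inclusion, $\mathrm{Ker}\,\sigma_{\tilde{x}}=\R\,i(\tilde{x})$, and the fact that $T_{\tilde{x}}L$ is the $\sigma$-orthogonal complement of $\mathfrak{g}(\tilde{x})$ — gives $\mathrm{Ker}(\sigma|_L)_{\tilde{x}}=\mathfrak{g}(\tilde{x})\oplus\R\,i(\tilde{x})$, so $\dim\mathrm{Ker}(d\alpha)=1$ and $\mathrm{Ker}\,\alpha\cap\mathrm{Ker}\,d\alpha=0$: $\alpha$ is a contact form with Reeb field $\bar{i}$. Finally one identifies $(\tilde{X}\sslash G)/U(1)=(L/G)/U(1)=(L/U(1))/G=\underline{\Phi}^{-1}(0)/G=X\sslash G$, using $L/U(1)=\underline{\Phi}^{-1}(0)$ since $\Phi$ descends to $\underline{\Phi}$, and one checks — exactly as in the last part of the proof of Theorem \ref{thm: Frobenius} — that $d\alpha$ pushes forward to the reduced symplectic form $\underline{\omega}_{X\sslash G}$, which follows from $\pi^*(d\alpha)=(d\overline{\omega})|_L$ together with the descent of $d\overline{\omega}$ to $\underline{\omega}$ on $X$. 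The main obstacle is precisely these last identifications and the accompanying non-degeneracy claims, for which I would follow $5.5$ of \cite{[RZ]}.
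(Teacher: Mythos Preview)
Your argument is correct and is the standard prequantum Marsden--Weinstein reduction proof. Note, however, that the paper itself does not prove this Proposition/Definition: it is stated with a parenthetical citation to 5.5 of \cite{[RZ]} and no proof environment follows, so the authors simply import the result from Ratiu--Ziegler. There is therefore nothing in the paper to compare your proof against; what you have written is essentially the argument the paper defers to that reference.
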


In view of the above definitions it is possible to construct an induced prequantum $G$-space, as in section 6 of \cite{[RZ]}.
Let $H$ be a closed subgroup of the Lie group $G$ and let $(\tilde{Y}, \overline{\omega}_{\tilde{Y}})$ be a prequantum $H$-space with moment map denoted by $\Psi$. Let $\tilde{N}:=T^*G\times \tilde{Y}$ be the prequantum $(G\times H)$-space endowed with the 1-form $\overline{\omega}_{T^*G}+\overline{\omega}_{\tilde{Y}}$ and action $(g,h)(p,\tilde{y}):=(gph^{-1},h(\tilde{y}))$. This action admits an equivariant moment map $\phi\times \psi:\tilde{N}\to \mathfrak{g}\times\mathfrak{h}^*$, where
$$\phi(p,\tilde{y})=pq^{-1}$$
$$\psi(p,\tilde{y})=\Psi(\tilde{y})-q^{-1}p_{|\mathfrak{h}}.$$
Proceeding as in Remark \ref{oss: action}, it is possible to define the induced prequantum manifold
$$\mathrm{Ind}_H^G\tilde{Y}:=\tilde{N}\sslash H=\psi^{-1}(0)/H.$$
The induced prequantum $G$-space $\mathrm{Ind}_H^G\tilde{Y}$ prequantizes the symplectically induced manifold $\mathrm{Ind}_H^G{Y}$.

In the prequantum framework the induction in stages holds:
\begin{thm}
\label{thm: prequantum stages} {\rm (\cite{[RZ]})}
Let $H\subset K\subset G$, where $H$ and $K$ are closed subgroups of the Lie group $G$. Then:
$$\mathrm{Ind}_K^G\mathrm{Ind}_H^K\tilde{Y}=\mathrm{Ind}_H^G\tilde{Y}.$$
\end{thm}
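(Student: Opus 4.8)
The plan is to transport the proof of the symplectic induction in stages (Theorem~\ref{thm: stages}), equivalently the argument attached to the diagram of Figure~\ref{fig: diagram}, into the prequantum setting, replacing symplectic reductions by the contact reductions recalled above and keeping careful track of the $1$-forms, the residual $U(1)$-actions and the Reeb fields. First I would introduce the total prequantum space $\tilde{M}:=T^*G\times T^*K\times\tilde{Y}$, equipped with the $1$-form $\overline{\omega}_{\tilde{M}}:=\overline{\omega}_{T^*G}+\overline{\omega}_{T^*K}+\overline{\omega}_{\tilde{Y}}$ and with the staged action of $G\times K\times H$ given by $(g,k,h)(p,\overline{p},\tilde{y})=(gpk^{-1},k\overline{p}h^{-1},h(\tilde{y}))$, whose equivariant moment map $\tilde{M}\to\mathfrak{g}^*\times\mathfrak{k}^*\times\mathfrak{h}^*$ is computed component-wise exactly as in the constructions recalled above. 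Reducing $\tilde{M}$ first at $0\in\mathfrak{h}^*$ (by $H$) produces $T^*G\times\mathrm{Ind}_H^K\tilde{Y}$, and reducing that at $0\in\mathfrak{k}^*$ (by $K$) produces $\mathrm{Ind}_K^G\mathrm{Ind}_H^K\tilde{Y}$, the leftover $\mathfrak{g}^*$-component furnishing the $G$-moment map; on the other hand, $\mathrm{Ind}_H^G\tilde{Y}=\tilde{N}\sslash H$ will be recovered by collapsing the $T^*K$-factor.

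Next, in analogy with the shift map $r$ of Lemma~\ref{lem: r smooth} and Figure~\ref{fig: diagram}, I would define $r:\tilde{M}\to\tilde{N}=T^*G\times\tilde{Y}$ by $r(p,\overline{p},\tilde{y})=(p\overline{q}^{-1},\tilde{y})$, where $\overline{q}\in K\subset G$ is the base point of $\overline{p}\in T^*_{\overline{q}}K$ and $p\overline{q}^{-1}$ denotes the cotangent-lifted $G$-action. A direct cotangent computation shows that $r^*\overline{\omega}_{\tilde{N}}$ differs from $\overline{\omega}_{\tilde{M}}$ only by terms that vanish on the $H$-level set $\overline{\psi}^{-1}(0)$, that $r$ carries the $K\times H$-action to the $H$-action with $K$-orbits as its fibres, and that $r$ is compatible with the $U(1)$-actions and Reeb fields, being the identity on the $\tilde{Y}$-factor while the $U(1)$-structure carried by $T^*G$ and $T^*K$ is the trivial one. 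Restricting $r$ to $(\overline{\phi}\times\overline{\psi})^{-1}(0)\to\psi^{-1}(0)$ yields the prequantum analogue $s$ of the central map of Figure~\ref{fig: diagram}: it is surjective with $K$-orbit fibres and intertwines the $K\times H$- and $H$-actions, hence descends to a bijection $t:\mathrm{Ind}_K^G\mathrm{Ind}_H^K\tilde{Y}\to\mathrm{Ind}_H^G\tilde{Y}$ fitting into the corresponding commutative diagram.

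It then remains to upgrade $t$ to an isomorphism of prequantum $G$-spaces. Smoothness of $t$ and $t^{-1}$ follows exactly as in Theorem~\ref{thm: stages}: the presence of $T^*G$ and $T^*K$ makes the $H$- and $K$-actions free and proper, as in Remark~\ref{oss: action}, so all quotients are genuine manifolds (and, if desired, the argument may alternatively be run diffeologically as in Theorem~\ref{thm: Frobenius}). The $G$-equivariance of $t$ follows from that of $r$ and $s$, and $t$ intertwines the reduced contact $1$-forms because $s^*j_3^*\overline{\omega}_{\tilde{N}}=j^*j_1^*\overline{\omega}_{\tilde{M}}$ on the zero-level sets; pulling this back along the surjective composite $\pi_2\circ\pi$ of the projections of the diagram and using that its action on forms is injective then gives $t^*\overline{\omega}_{\mathrm{Ind}_H^G\tilde{Y}}=\overline{\omega}_{\mathrm{Ind}_K^G\mathrm{Ind}_H^K\tilde{Y}}$. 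Since a diffeomorphism matching the prequantum $1$-forms automatically matches their Reeb fields, and, being $U(1)$-equivariant, the circle actions, $t$ is the desired isomorphism; upon passage to the quotient by $U(1)$ it covers the symplectic diffeomorphism of Theorem~\ref{thm: stages}, as it must.

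The step I expect to be the main obstacle is precisely this last one: the bookkeeping of the primitive $1$-form rather than of the symplectic form. An isomorphism of prequantum spaces must match the $1$-forms $\overline{\omega}$ exactly, not merely their differentials, and must respect the $U(1)$-structure, so the shift map $r$ and its descent $t$ have to be set up so that the extra primitive terms generated by the cotangent translation genuinely vanish on the relevant constraint sets, and so that the Reeb fields and circle actions are transported faithfully through the two successive contact reductions. Everything else is a routine transcription of the symplectic argument.
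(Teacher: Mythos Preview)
The paper does not actually prove this theorem: it is stated with the attribution ``(\cite{[RZ]})'' and no argument is supplied, exactly as for the symplectic induction in stages (Theorem~\ref{thm: stages}). Both are quoted results from Ratiu--Ziegler; the paper's own contributions are the diffeological upgrades of the two Frobenius reciprocity theorems, not of the induction-in-stages theorems. So there is no proof in the paper to compare your proposal against.

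That said, your sketch is a faithful prequantum transcription of the symplectic induction-in-stages argument (which is indeed how \cite{[RZ]} proceeds): build $\tilde{M}=T^*G\times T^*K\times\tilde{Y}$ with the staged $G\times K\times H$-action, define the shift map $r$ collapsing the $T^*K$-factor, restrict to the zero level set to obtain $s$, and descend to the bijection $t$; freeness and properness come from the cotangent factors, and the $1$-form identity $s^*j_3^*\overline{\omega}_{\tilde{N}}=j^*j_1^*\overline{\omega}_{\tilde{M}}$ on the constraint set pushes down via the surjective $\pi_2\circ\pi$. Your identification of the only genuinely delicate point---that in the prequantum setting one must match the contact $1$-forms themselves and the $U(1)$/Reeb structure, not just $d\overline{\omega}$---is exactly right, and is the only place where the argument is not a verbatim copy of the symplectic one.
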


\section{Prequantum Frobenius Reciprocity}
\begin{defn}
\label{def: prequantum intertwiner}
The \textit{intertwiner space} between two prequantum $G$-spaces $\tilde{X}_1$ and $\tilde{X}_2$ is 
$\mathrm{Hom}_G(\tilde{X}_1,\tilde{X}_2):=(\tilde{X}_1\boxtimes \tilde{X}_2)\sslash G$ (8.1 of \cite{[RZ]}).
\end{defn}
We may now state and prove the following:
\begin{thm}[Prequantum Frobenius Reciprocity]
Let $\tilde{X}$ be a prequantum $G$-space and $\tilde{Y}$ a prequantum $H$-space, then
$$\mathrm{Hom}_G({\tilde X},\mathrm{Ind}_H^G{\tilde Y})=\mathrm{Hom}_H(\mathrm{Res}_H^G{\tilde X},{\tilde Y}),$$ where the equality denotes a {\rm diffeological} diffeomorphism.
\end{thm}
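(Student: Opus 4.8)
The plan is to mimic the proof of Theorem \ref{thm: Frobenius} step by step, replacing symplectic reductions with prequantum reductions, Cartesian products with the twisted products $\boxtimes$, and the reduced symplectic $2$-forms with the reduced contact $1$-forms. Following \cite{[RZ]}, rename $G$ as $K$, set $\tilde N := \tilde X^- \boxtimes \tilde Y$ and $\tilde M := \tilde X^- \boxtimes T^*K \boxtimes \tilde Y$, equip $\tilde N$ with the residual $H$-action and $\tilde M$ with the residual $K\times H$-action, together with their moment maps $\psi$ and $\overline\phi\times\overline\psi$ exactly as in Section 8 of \cite{[RZ]}. By Definition \ref{def: prequantum intertwiner} and the construction of the induced prequantum space one has $\tilde N\sslash H = \mathrm{Hom}_H(\mathrm{Res}_H^K\tilde X,\tilde Y)$ and $(\tilde M\sslash H)\sslash K = \mathrm{Hom}_K(\tilde X,\mathrm{Ind}_H^K\tilde Y)$, so the statement amounts to exhibiting a diffeological diffeomorphism between these two spaces. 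Ratiu–Ziegler already construct maps $\tilde r:\tilde M\to\tilde N$, a map $\tilde s$ between the relevant zero level sets of the moment maps, and a set-theoretic bijection $\tilde t:(\tilde M\sslash H)\sslash K\to \tilde N\sslash H$ fitting into a commutative diagram completely parallel to Fig.\ \ref{fig: diagram} (with all $\times$ replaced by $\boxtimes$); the point is to upgrade $\tilde t$ to a diffeomorphism preserving the reduced contact forms.

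First I would establish smoothness of $\tilde t$ as in Theorem \ref{thm: Frobenius}: a plot of $(\tilde M\sslash H)\sslash K$ lifts locally, along the subductions $\pi_2$ and $\pi$, to a plot of $(\overline\phi\times\overline\psi)^{-1}(0)$, and commutativity ($\tilde t\circ\pi_2\circ\pi=\pi_3\circ \tilde s$) reduces the problem to smoothness of $\tilde s$; since the inclusions into $\overline\psi^{-1}(0)$ and $\tilde M$ are inductions and $\tilde r\circ j_1\circ j = j_3\circ \tilde s$, Proposition 1.31 of \cite{[IZ]} further reduces this to smoothness of $\tilde r$. Smoothness of $\tilde r$ is the analogue of Lemma \ref{lem: r smooth}: write a plot of $\tilde M$ via the product diffeology, with the $T^*K$-component expressed as $(P^K,P^\Omega(P^K))$ through the Lie group and functional diffeologies, and observe that, up to the identity on the $\tilde Y$-factor, $\tilde r$ is the action map of $K$, viewed as a diffeological group, on $\tilde X$, which is smooth because the classical action is. The one genuinely new feature is that $\boxtimes$ is a quotient by the diagonal circle $\Delta$, so a plot of $\tilde X^-\boxtimes T^*K\boxtimes \tilde Y$ must first be lifted locally, along the quotient subduction, to a plot of the honest product $\tilde X^-\times T^*K\times \tilde Y$ before the preceding argument applies; such a lift exists precisely because quotient projections are subductions.

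For $\tilde t^{-1}$ I would prove the analogue of Lemma \ref{lem: s subduction}, namely that $\tilde s$ is a subduction: it is smooth by the previous step and surjective onto $\psi^{-1}(0)$ by \cite{[RZ]}, so it suffices to lift plots of $\psi^{-1}(0)$ locally through $\tilde s$. Writing out $\tilde s\circ Q = P$ componentwise and using the prequantum moment-map identities (the analogues of \eqref{eq: Psi}--\eqref{eq: psi}, now phrased via the contact $1$-forms $\overline\omega$) forces the $K$-component of $Q$ to be $1_K$, so $Q$ is uniquely determined by $P$; one then checks that $Q$ is a plot of $(\overline\phi\times\overline\psi)^{-1}(0)$ using the fine vector space diffeology on $\mathfrak{k}^*$ exactly as in Lemma \ref{lem: s subduction}. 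Once $\tilde s$ is a subduction, a plot of $\tilde N\sslash H$ lifts through $\pi_3$ and then, by proposition 1.48 of \cite{[IZ]}, through $\tilde s$, and commutativity yields $\tilde t^{-1}\circ p|_V = \pi_2\circ\pi\circ\rho$, which is smooth; hence $\tilde t^{-1}$ is smooth.

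Finally, I would check that $\tilde t$ intertwines the reduced diffeological $1$-forms. The spaces $\tilde M$ and $\tilde N$ carry the $1$-forms coming from their prequantum structures (Section 6 of \cite{[RZ]}), which are in particular diffeological $1$-forms; exactly as in Theorem \ref{thm: Frobenius} — §6.38 of \cite{[IZ]} together with $G$-invariance of $\overline\omega$ along the level sets — these descend to diffeological $1$-forms on $\tilde M\sslash H$, $(\tilde M\sslash H)\sslash K$ and $\tilde N\sslash H$, even though these quotients need not be manifolds. From $\tilde s^*j_3^*\overline\omega_{\tilde N} = j^*j_1^*\overline\omega_{\tilde M}$ and commutativity of the diagram one obtains $(\pi_2\circ\pi)^*\tilde t^*\overline\omega_{\tilde N\sslash H} = (\pi_2\circ\pi)^*\overline\omega_{(\tilde M\sslash H)\sslash K}$, and since $\pi_2\circ\pi$ is surjective its pullback on forms is injective, whence $\tilde t^*\overline\omega_{\tilde N\sslash H} = \overline\omega_{(\tilde M\sslash H)\sslash K}$. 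I expect the main obstacle to be the bookkeeping around the $\boxtimes$-quotients: every argument that in the symplectic case was phrased on a Cartesian product must here be carried out after lifting along the $\Delta$-subduction, and the invariant object being tracked is now a contact $1$-form rather than a closed $2$-form; the moment-map computation forcing $\overline q = 1_K$ in the prequantum setting is the other place where a little extra care with the $1$-forms is required.
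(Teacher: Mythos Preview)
Your proposal is correct and follows the same strategy as the paper, which likewise reduces the claim to the tilded analogues of Lemmas~\ref{lem: r smooth} and~\ref{lem: s subduction} and handles the $\boxtimes$-quotient by passing to the honest Cartesian product --- packaged in the paper as a three-level commutative diagram (Fig.~\ref{fig: diagram prequantum FR}) introducing $\tilde{\tilde M}=\tilde X^-\times T^*K\times\tilde Y$, $\tilde{\tilde N}=\tilde X^-\times\tilde Y$ and a map $\tilde{\tilde r}$ that descends to $\tilde r$, which is exactly your ``lift along the $\Delta$-subduction'' step. One minor notational point: since $T^*K$ carries no intrinsic $U(1)$-action, $\tilde M$ should be written as $\tilde X^-\boxtimes(T^*K\times\tilde Y)$ rather than with two $\boxtimes$'s; and your discussion of the reduced contact $1$-forms actually goes beyond what the paper's own prequantum proof spells out.
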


\begin{center}
\begin{figure}
\begin{tikzcd}
&\node (1-1) {\tilde{\tilde{M}}:=\tilde{X}^-\times T^*G\times \tilde{Y}}; &&&& \node (1-2) {\tilde{\tilde{N}}:=\tilde{X}^-\times\tilde{Y}}; \\
&\node (2-1) {\tilde{M}:=\tilde{X}^-\boxtimes T^*G\times \tilde{Y}}; &&&& \node (2-2) {\tilde{\tilde{N}}:=\tilde{X}^-\times\tilde{Y}}; \\
&\node(3-1) {M:=X^-\times T^*G\times Y}; &&&&\node(3-2) {N:=X^-\times Y};\\
\ar["\tilde{\tilde{r}}", from=1-1, to=1-2]
\ar[ "\rm{mod}\,\Delta", from=1-1, to=2-1]
\ar["\rm{mod}\,\Delta", from=1-2, to=2-2]
\ar["\tilde{r}", from=2-1, to=2-2]
\ar["r", from=3-1, to=3-2]
\ar["\rm{mod}\, (U(1)\times U(1))/\Delta", from=2-1, to=3-1]
\ar["\rm{mod}\, (U(1)\times U(1))/\Delta", from=2-2, to=3-2]
\end{tikzcd}
\caption{Commutative diagram 8.1 of \cite{[RZ]}.}
\label{fig: diagram prequantum FR}
\end{figure}
\end{center}
\begin{proof}
The first part of the proof coincides with the one of Theorem 8.2 of \cite{[RZ]}. 
Referring to the commutative diagram in Fig.\ref{fig: diagram prequantum FR}, we can define $\tilde{\tilde{r}}(\tilde{x},p,\tilde{y})=(q^{-1}(\tilde{x}),\tilde{y})$, where $p\in T^*_qG$. The map $\tilde{\tilde{r}}$ descends to a map $\tilde{r}$ and to a map $r$, that coincides with the one of Theorem \ref{thm: Frobenius}; indeed, at each level of the diagram in Fig.\ref{fig: diagram prequantum FR} we have a copy of the one in Fig.\ref{fig: diagram}, where we find the tilded versions of the maps $t$ and $s$. By definition \ref{def: prequantum intertwiner}, proving the Theorem is tantamount to show that $\tilde{t}:(\tilde{M}\sslash H)\sslash G\to \tilde{N}\sslash H$ is a diffeological diffeomorphism. By Theorem 8.2 of \cite{[RZ]}, $\tilde{t}$ is a bijection. It is also (diffeologically) smooth together with its inverse, since Theorem \ref{thm: Frobenius} and Lemmas \ref{lem: r smooth} and \ref{lem: s subduction}  also hold in the prequantum case: their proofs can be performed as before, starting from diffeological spaces $\tilde{X}$ and $\tilde{Y}$ in the tilded version of the diagram in Fig.\ref{fig: diagram}. Thus $\tilde{t}$ is a diffeological diffeomorphism.

\end{proof}

\section{Conclusions and outlook} \par
In this paper, a proof of the Frobenius Reciprocity theorem in the symplectic framework  --
together with a prequantum $G$-space version --
enhancing the Ratiu-Ziegler set-theoretical one has been given, resorting to well established techniques in diffeology theory \cite{[S1],[S2],[IZ],[W]}. This is an encouraging signal towards further applications of the theory in various directions, including infinite dimensional contexts like loop space geometry and geometrical hydrodynamics (see e.g. \cite{[B],[AK],[HSS]}).\par
\null
\noindent

\section*{Appendix}\par
\smallskip
Here, for the sake of completeness, we provide a proof of the statement made in Remark 3.1 (known in the literature in different guises): {\it the free and proper action of $H\subset G$ on $T^*G$  is free and proper on $N$ as well.}\par
\begin{proof}
The action of $H$ on $N$ is defined by $h(p,y)=(ph^{-1},h(y))$. Let $h\in H, (p,y)\in N$ such that $h(p,y)=(p,y)$, then $(ph^{-1},h(y))=(p,y)$ and since the right action of $H$ on $T^*G$ is free and $ph^{-1}=p$, we have $h=1_H$, hence the $H$-action is free on $N$. Furthermore, this implies that $\psi$ is a submersion and $\psi^{-1}(0)$ is a submanifold.
Now let us prove that the $H$-action on $N$ is proper as well. This will imply that  $N\sslash H$ is a manifold.
Let $$f: H\times N\to N\times N, (h, (p,y))\mapsto (h(p,y), (p,y)),$$ and let $U\times V$ be a compact subset of $N\times N$; we want to prove that $$f^{-1}(U\times V)=\{(h,(p,y))\in H\times V|h(p,y)\in U\}$$ is compact. Being $U$ and $V$ compact subsets of $N=T^*G\times Y$, their projections on $T^*G$ and $Y$, say $U_1$, $U_2$, $V_1$, $V_2$, respectively, are compact. Let us also consider the $H$-action on $T^*G$ and the map $$\phi: H\times T^*G\to T^*G\times T^*G, (h,p)\mapsto (ph^{-1}, p).$$ Since the $H$-action on $T^*G$ is proper, $\phi^{-1}(U_1\times V_1):=\tilde{H}\times V_1$ is a compact subset of $H\times V_1$ and, in particular, $\tilde{H}$ is compact. Finally, if $h(p,y)\in U_1\times U_2$, where $(p,y)\in V$, then $(ph^{-1}, p)=\phi(h,(p,y))\in U_1\times V_1$, thus
$h\in \tilde{H}$ and $$f^{-1}(U\times V)=\{(h,(p,y))\in \tilde{H}\times V|h(p,y)\in U\}.$$ Being $U$ closed, $f^{-1}(U\times V)$ is a closed subset of the compact set $\tilde{H}\times V$, then it is compact and  the $H$-action is proper on $N$.
\end{proof}

{\bf Acknowledgements.} The first named author is currently benefiting from a three year Ph.D grant (issued by \enquote{Consorzio Milano Bicocca-Pavia}; project: Geometric quantization and ramifications). The second named author's research is supported by UCSC D1-funds.
Both authors thank A.M. Miti for discussions. Their research is carried out within INDAM-GNSAGA's framework.

\vspace{1cm}

\end{document}